 \newcommand{\m}{\mathfrak{m} }
  \newcommand{\Ass}{\operatorname{Ass}}
  \newcommand{\reg}{\operatorname{reg}}
\theoremstyle{plain}
 \newtheorem{theorem}{Theorem}[section]
 \newtheorem{lem}[theorem]{Lemma}
 \newtheorem{proposition}[theorem]{Proposition}
 \newtheorem{thm}[theorem]{Theorem}
 \newtheorem{cor}[theorem]{Corollary}
\newtheorem{observation}[theorem]{Observation}
 \theoremstyle{definition}
 \newtheorem{remark}[theorem]{Remark}
 \newtheorem{defn}[theorem]{Definition}
 \newtheorem{example}[theorem]{Example}
 \theoremstyle{remark}
\title[Regularity of symbolic powers of certain graphs] {Regularity of symbolic powers of certain graphs}
\author[B. Chakraborty and M. Mandal ]{Bidwan Chakraborty$^\dag$ and Mousumi Mandal$^*$}
\thanks{$^\dag$ Supported by CSIR grant No.: $09/081(1303)/2017\mbox{-EMR-I}$, India}
 \thanks{$^*$ Supported by SERB(DST) grant No.: $\mbox{EMR}/2016/006997$, India}
\address{Department of Mathematics, Indian Institute of Technology Kharagpur, 721302, India} \email{bidwan@iitkgp.ac.in}
 \address{Department of Mathematics, Indian Institute of Technology Kharagpur, 721302, India} \email{mousumi@maths.iitkgp.ac.in}
\begin{document}
 \maketitle
 \begin{abstract}
Let $G_{n,r}$ denote the graph with $n$ vertices $\{x_1,\ldots,x_n\}$ in cyclic order and for each vertex $x_i$ consider the set $A_i=\{x_{i-r},\ldots,x_{i-1},x_{i+1},x_{i+2},\ldots, x_{i+r}\},$ where $x_{i-j}$ is the vertex $x_{n+i-j}$, whenever $i<j$ and $0\leq r\leq \Bigl\lfloor\dfrac{n}{2}\Bigr\rfloor -1$. In $G_{n,r}$,  every vertex $x_i$ is adjacent to all the vertices of $V(G_{n,r})\backslash  A_i$. Let $I=I(G_{n,r})$ be the edge ideal of $G_{n,r}$. We show that Minh's conjecture is true for $I,$ i.e. regularity of ordinary powers and symbolic powers of $I$ are equal. We compute the Waldschmidt constant and resurgence for the whole class.
 \end{abstract}
 \section{Introduction}

Let $R = k[x_1,\ldots,x_n ]$ be a polynomial ring over a field $k.$ Let $M$ be a finitely generated graded $R$-module. The Castelnuovo-Mumford regularity (or simply, regularity) of $M,$ denoted by $\reg(M),$ is defined as $\reg(M) = \max\{a_i(M) + i| i \geq 0\},$ where $a_i(M)$ denotes the largest non-vanishing degree of the $i$-th local cohomology module of $M$ with respect to the graded maximal ideal of $R.$ It is an important invariant in commutative algebra and algebraic geometry.
There is a  correspondence between quadratic square free monomial ideals of
$R$ and finite simple graphs with $n$ vertices corresponding to the variables of $R$. To every finite simple graph $G$ with vertex set
$V(G)= \{x_1 ,\ldots,x_n\}$ and edge set $E(G),$ we associate its edge ideal $I=I(G)$ defined
by $I(G) =\{ x_ix_j \mid x_ix_j \in E(G)\}\subseteq R.$ For $n\geq 1$, the $n$-th symbolic power of $I$ is defined as $I^{(n)}=\displaystyle{\bigcap_{p\in \Ass I}(I^nR_p\cap R)}.$ In general computing the generators of symbolic powers of an ideal is a very difficult job so as to compute the regularity of the symbolic powers of ideals. Apart from regularity, it is interesting to study for which values of $r$ and $m$ the containment $I^{(r)}\subseteq I^m$ holds. To answer this question C. Bocci and B. Harbourne in \cite{bocci2010} defined an asymptotic quantity known as resurgence which is defined as  $\rho{(I)} =\sup \tiny\{ \frac{s}{t} ~~|~~I^{(s)}\nsubseteq I^t \tiny\}$  and showed that it exists for radical ideals. Since computing the exact value of resurgence is difficult, another asymptotic invariant $ \widehat{\alpha}{(I)} = \displaystyle{\lim_{s\rightarrow\infty}{} \frac{\alpha{(I^{(s)})}}{s}}~,$  known as Waldschmidt constant was introduced by Waldschmidt in \cite{waldschmidt1977proprietes}, where $\alpha(I)$ denotes the least generating degree of $I$.

This paper is mainly motivated by the conjecture of N. C. Minh which predict that if $G$ is a finite simple graph, then
$\reg I(G)^{(s)}=\reg I(G)^s $ for all $s \geq 1.$ By the result of Simis, Vasconcelos and Villarreal in \cite{svv}, the conjecture is true for bipartite graphs as  $I(G)^{(s)} = I(G)^s$ for all $s\geq 1$. Thus it is interesting to study the conjecture for non-bipartite graphs. In this direction in \cite{gu2018symbolic}, Gu et al. have proved the conjecture for odd cycles. Recently many researchers are working in this direction and in \cite{jayanthan}, Jayanthan and Kumar have proved the conjecture for certain class of unicyclic graph, in  \cite{chakraborty2019invariants} we proved it for complete graph and in \cite{fakhari},\cite{fakharichordal}, Seyed Fakhari has solved the conjecture for unicyclic graph and chordal graph respectively. In \cite{dipasquale2019asymptotic}, DiPasquale et al. have studied resurgence and asymptotic resurgence.

In this paper, we investigate the invariants for the class of graph $G_{n,r}$ described in the abstract. One description of such graph is given in section $2$ in figure \ref{overflow}.
 We see that for different values of $r$, $G_{n,r}$ covers a large class of graphs. For example if $r=0$ and $r=1$, $G_{n,r}$ gives the class of complete graphs and anticycles respectively. If $r=\Bigl\lfloor\dfrac{n}{2}\Bigr\rfloor -1$, then $G_{n,r}$ become bipartite graph or odd cycles depending on $n$ is even or odd. Here we describe a unified way to compare the regularity of the symbolic powers and ordinary powers of edge ideals of $G_{n,r}$ for different values of $n$ and $r$. Instead of studying separately for the individual classes of graphs we show that Minh's conjecture is true for the whole class of $G_{n,r}$. In section 2, we define the terminologies and recall the basic results to be used in the rest of the paper. In section 3, we describe the condition when $G_{n,r}$ become unmixed and in Lemma \ref{unmixed}, we show that for $r=1,2$, $G_{n,r}$ is always unmixed. In proposition \ref{gapfree}, we show that for $r=1$ and $2$, $G_{n,r}$ is gap-free graph. In section 4, we give an explicit description of the elements of the symbolic powers of edge ideals of $G_{n,r}$ when it is unmixed. Using the local cohomology technique in Theorem \ref{reginequality}, we show that Minh's conjecture is true for the  graph $G_{n,r}$ for all values of $n$ and $r$. In section 5, we compute the invariants like Waldschmidt constant and resurgence for the edge ideal of $G_{n,r}$.

 \section{Preliminaries}
 In this section, we collect the notations and terminologies used in the rest of the paper.
 \begin{defn}
   Let $V' \subseteq V(G)=\tiny\{x_1,x_2,\ldots,x_n\tiny\}$ be a set of vertices. For a monomial $ x^{\underline{a}} \in k[x_1,\ldots,x_n]$ with exponent vector $\underline{a} = (a_1,a_2,\ldots ,a_n)$ define the vertex weight $W_{V'}{(x^{\underline{a}})}$ to be $$ W_{V'}{(x^{\underline{a}})} := \sum_{x_i\in V'}{a_i}. $$
 \end{defn}
\begin{defn}
The vertex independence number of a graph is the cardinality of the largest independent vertex set and it is denoted by $\alpha(G).$ Formally,
 $$\alpha(G)=\max\{|U| : U \subseteq V(G)\text{ is independent set}\}.$$

\end{defn}

Now we recall some results which  describe the symbolic powers of the edge ideal in terms of minimal vertex covers of the graph.

\begin{lem}\cite[Corollary 3.35]{vantuylbook}
Let $G$ be a graph on vertices $\{x_1,\ldots,x_n\}, I=I(G)\subseteq k[x_1,\ldots,x_n]$ be the edge ideal of $G$ and $V_1,\ldots,V_r$ be the minimal vertex covers of $G.$ Let $P_j$ be the monomial prime ideal generated by the variables in $V_j.$ Then $$I=P_1 \cap\cdots\cap P_r$$ and $$I^{(m)}=P_1^{m} \cap\cdots\cap P_r^{m}.$$
\end{lem}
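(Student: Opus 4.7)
The plan is to exploit two structural facts: that $I = I(G)$ is a squarefree monomial ideal (hence radical) and that each $P_j$ is generated by a subset of the variables of $R$. For the first equality I will observe that $I$, being radical, coincides with the intersection of its minimal primes, so the task reduces to identifying them. A monomial prime $P_V$ generated by a set of variables $V$ contains $I$ if and only if every edge $x_ix_j \in E(G)$ has at least one endpoint in $V$, that is, precisely when $V$ is a vertex cover of $G$; minimality of $P_V$ among primes containing $I$ translates to minimality of $V$ as a vertex cover. This gives $\Min(I) = \{P_1, \ldots, P_r\}$ and therefore $I = P_1 \cap \cdots \cap P_r$.

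For the second equality, the starting point is the definition $I^{(m)} = \bigcap_{P \in \Ass(R/I)} (I^m R_P \cap R)$. Since $I$ is squarefree, $\Ass(R/I) = \Min(I) = \{P_1, \ldots, P_r\}$, so it suffices to compute each contraction $I^m R_{P_j} \cap R$. The crucial intermediate claim is that $I R_{P_j} = P_j R_{P_j}$. The inclusion $\subseteq$ is immediate from $I \subseteq P_j$; for the reverse, I will invoke the minimality of the vertex cover $V_j$: for each $x \in V_j$, the set $V_j \setminus \{x\}$ fails to cover some edge, so there exists $y \notin V_j$ with $xy \in E(G)$, and then $xy \in I$ together with the invertibility of $y$ in $R_{P_j}$ forces $x \in I R_{P_j}$. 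Raising to the $m$-th power gives $I^m R_{P_j} = P_j^m R_{P_j}$, and intersecting with $R$ reduces the problem to computing $P_j^m R_{P_j} \cap R$.

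The step I expect to be the main obstacle is showing $P_j^m R_{P_j} \cap R = P_j^m$, i.e., that $P_j^m$ is $P_j$-primary so that its symbolic and ordinary $m$-th powers agree. This is where the hypothesis that $P_j$ is generated by a subset of variables is essential: those variables form a regular sequence on the polynomial ring $R$, and standard facts about powers of ideals generated by regular sequences (or, equivalently, the observation that $R/P_j^m$ has $P_j$ as its unique associated prime) ensure that $P_j^m$ is $P_j$-primary. Once this is in place, taking the intersection over $j$ of the equalities $I^m R_{P_j} \cap R = P_j^m$ yields $I^{(m)} = P_1^m \cap \cdots \cap P_r^m$, completing the proof.
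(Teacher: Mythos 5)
The paper does not prove this lemma; it is quoted from Van Tuyl's notes \cite[Corollary 3.35]{vantuylbook}, so there is no in-paper argument to compare against. Your proof is correct and is essentially the standard one: the identification of minimal primes with minimal vertex covers, the localization argument $IR_{P_j}=P_jR_{P_j}$ via minimality of the cover (the edge $xy$ with $y\notin V_j$ making $x$ a unit multiple of an element of $IR_{P_j}$), and the fact that $P_j^m$ is $P_j$-primary because $P_j$ is generated by a regular sequence of variables, which gives $P_j^mR_{P_j}\cap R=P_j^m$. No gaps.
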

%In the next lemma, the elements in the symbolic powers of edge ideals have been described in terms of minimal vertex cover of the graph.

\begin{lem}\cite[Lemma 2.6]{bocci2016}\label{psymbolic}
Let $I \subseteq R$ be a square free monomial ideal with minimal primary decomposition $I=P_1\cap\cdots \cap P_r ~~with ~~P_j = (x_{j_1},\dots ,x_{j_{s_j}})$ for $j= 1,\ldots ,r$. Then $ {x_1^{a_1}}\cdots {x_n^{a_n}} \in I^{(m)} \mbox{ if and only if } a_{j_1}+\dots +a_{j_{s_j}}\geq m$ for $j=1,\dots,r$.

\end{lem}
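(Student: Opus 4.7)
The plan is to prove Lemma \ref{psymbolic} by reducing the membership question for $I^{(m)}$ to a membership question for each $P_j^m$ individually, and then solving that pointwise combinatorial problem.

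First I would invoke the fact that for a squarefree monomial ideal the associated primes are exactly the minimal primes $P_1,\dots,P_r$ from the minimal primary decomposition; no embedded primes appear. Consequently the definition $I^{(m)}=\bigcap_{\mathfrak p \in \Ass I}(I^mR_{\mathfrak p}\cap R)$ simplifies: for each minimal prime $P_j$, the localization $I R_{P_j}$ equals $P_j R_{P_j}$ (since the other primes become units after inverting elements outside $P_j$), so $I^m R_{P_j} \cap R = P_j^m$. Intersecting gives $I^{(m)}=P_1^m \cap \cdots \cap P_r^m$. (This is exactly the content of the Corollary $3.35$ quoted just above in the edge-ideal case, but the same argument goes through for any squarefree monomial ideal.) Thus $x^{\underline a}=x_1^{a_1}\cdots x_n^{a_n}\in I^{(m)}$ iff $x^{\underline a}\in P_j^m$ for every $j$.

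Next I would prove the pointwise statement: \emph{For a monomial prime $P_j=(x_{j_1},\dots,x_{j_{s_j}})$, a monomial $x^{\underline a}$ lies in $P_j^m$ if and only if $a_{j_1}+\cdots+a_{j_{s_j}}\geq m$.} For the easy direction, $P_j^m$ is generated as an ideal by the products $x_{j_1}^{b_1}\cdots x_{j_{s_j}}^{b_{s_j}}$ with $b_1+\cdots+b_{s_j}=m$; any multiple of such a generator has weight at least $m$ on the variables $x_{j_1},\dots,x_{j_{s_j}}$, which proves necessity. For sufficiency, if $a_{j_1}+\cdots+a_{j_{s_j}}\geq m$, I can peel off a submonomial $x_{j_1}^{b_1}\cdots x_{j_{s_j}}^{b_{s_j}}$ with $0\leq b_i\leq a_{j_i}$ and $\sum b_i=m$ (greedily, using coordinates); the remaining factor lies in $R$, exhibiting $x^{\underline a}$ as an element of $P_j^m$.

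Combining the two reductions gives the claim: $x^{\underline a}\in I^{(m)}$ iff $x^{\underline a}\in P_j^m$ for all $j$ iff $a_{j_1}+\cdots+a_{j_{s_j}}\geq m$ for every $j=1,\dots,r$. I do not expect any serious obstacle here; the only subtle point is the absence of embedded primes for squarefree monomial ideals, which is standard (it can be justified by polarization or by noting that the quotient $R/I$ is reduced, so $\Ass R/I=\Min R/I$).
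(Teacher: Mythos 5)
Your argument is correct, but note that the paper itself offers no proof of this lemma: it is quoted verbatim from \cite[Lemma 2.6]{bocci2016}, so there is no internal proof to compare against. Your route --- first identifying $I^{(m)}$ with $P_1^m\cap\cdots\cap P_r^m$ via the absence of embedded primes for a squarefree monomial ideal (each $P_j^m$ being $P_j$-primary, so $I^mR_{P_j}\cap R=P_j^m$), and then characterizing monomial membership in $P_j^m$ by the weight condition $a_{j_1}+\cdots+a_{j_{s_j}}\geq m$ --- is the standard proof and is essentially the one given in the cited reference, so nothing further is needed.
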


 Using Lemma \ref{psymbolic}, and the concept of vertex weight Janssen et al. in  \cite{janssen2017comparing} described  the elements of symbolic powers of edge ideals as follows
$$ I^{(t)} = (\{ x^{\underline{a}} ~|\mbox{ for all minimal vertex covers }V^{\prime}, W_{V^{\prime}}(x^{\underline{a}})\geq t \}).$$
Further they have divided the elements of the symbolic powers of edge ideals into two sets written as  $I^{(t)}=(L(t))+(D(t)),$ where
  $$L(t) =  \{ x^{\underline{a}}~ | \deg(x^{\underline{a}})\geq 2t\mbox{ and for all minimal vertex covers } V^{\prime}, W_{V^{\prime}}(x^{\underline{a}})\geq t \}$$ and
$$D(t) = \{ x^{\underline{a}}~ | \deg(x^{\underline{a}})< 2t\mbox{ and for all minimal vertex covers } V^{\prime}, W_{V^{\prime}}(x^{\underline{a}})\geq t \}.$$
Thus for any graph, if we are able to identify the elements in $L(t)$ and $D(t)$ then we will be able to describe $I^{(t)}$.

%\begin{defn}
%\begin{enumerate}
%\item Let $X = \{x_1 ,\ldots,x_n \}$ be a finite set, and let $E=\{E_1 ,\ldots,E_s \}$ be a family of distinct subsets of $X.$ The pair $H = (X,E)$ is called a hypergraph if $E_i\neq \phi$ for each $i.$ The elements of X are called the vertices, while the elements of $E$ are called the hyperedges of $H.$ The size of vertex set is called the order of the hypergraph, and the size of edges set is the size of the hypergraph.
%\item  A $k$-uniform hypergraph is a hypergraph such that all its hyperedges have size $k.$
%\end{enumerate}
%
%\end{defn}

 \begin{defn}
Let $ x^{\underline{a}} \in k[x_1,\ldots,x_{n}]$ be a monomial and $G$ be a finite simple connected graph on the set of vertices $\{x_1,\dots,x_n\}$. Let $\{e_1,e_2,\dots,e_r\}$ denote the set of edges in the graph. We may write $ x^{\underline{a}}=x_1^{a_1}x_2^{a_2} \cdots x_{n}^{a_{n}}e_1^{b_1}e_2^{b_2}\cdots e_{r}^{b_{r}} $,
where $b(x^{\underline{a}}) := \sum{b_j}$ is as large as possible.  When $x^{\underline{a}}$ is written in this way, we call this an optimal form of $x^{\underline{a}}$ or we say that $x^{\underline{a}}$ is expressed in optimal form, or simply $x^{\underline{a}}$ is in optimal form. In addition, each $ x_i^{a_i} $ with $a_i >0 $ in this form is called an ancillary factor of the optimal form, or just ancillary for short and $x_i$'s are called ancillary vertices.

\end{defn}
Note that optimal form of a monomial need not be unique but $b(x^{\underline{a}})$ is unique.
Let us give an example to understand the definition of optimal form.
\begin{example}
Let $I=I(C_5)$ be the edge ideal on the vertex set $\{x_1,x_2,x_3,x_4,x_5\},$ i.e, $I=<x_1x_2,x_2x_3,x_3x_4,x_4x_5,x_5x_1>.$ Consider the monomial $x^{\underline{a}}=x_1^3x_2^4x_3x_4x_5^3.$ Note that we can write $x^{\underline{a}}=x_5^2(x_1x_2)^3(x_2x_3)(x_4x_5)$ and also observe that $b(x^{\underline{a}})$ is maximum in this factorization, so this is an optimal form of the monomial.
\end{example}

\section{Description Of The Graph}

In this section, we try to analyze the properties of the graph $G_{n,r}$ for different values of $n$ and $r.$ We are considering the vertex set $V=\{x_1,\ldots,x_n\}$ in cyclic order to ensure that the vertices $x_1,x_n$ are consecutive. The following figure describe one such graph when $n=7,r=2.$

% Let $R=k[x_1,..,x_n]$ be a polynomial ring. Let the vertex set $\{x_1,\ldots,x_n\}$ is in cyclic order. Let for each vertex $x_i,$ $A_i$ denote the set $\{x_{i-r},\ldots,x_{i-1},x_{i+1},\\x_{i+2},\ldots, x_{i+r}\}.$ If $i<j$ then $x_{i-j}$ is the variable $x_{n+i-j}.$ Let $G_{n,r}$ be the graph such that each vertex $x_i$ is adjacent to $V(G)\backslash  A_i,$ where $0\leq r\leq \Bigl\lfloor\dfrac{n}{2}\Bigr\rfloor -1.$ If $r=0$ then the graph is a complete graph and we have already studied it in \cite{chakraborty2019invariants}. When $n$ is even and $r= \Bigl\lfloor\dfrac{n}{2}\Bigr\rfloor -1,$ then it is a bipartite graph, so nothing to prove. Here we will discuss the remaining cases.
 %Here we consider the graph $G_{n,r}$ described in the abstract. The following image is an example of such type of graph,when $n=7,r=2.$

 \begin{figure}[h!]
 \includegraphics[width=0.2\linewidth]{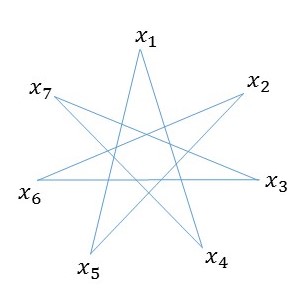}
 \caption{ Example with $n=7$ and $r=2$\label{overflow}}

 \end{figure}

From the description of the graph $G_{n,r},$ it is clear that any set of consecutive $n-(r+1)$ vertices is always a minimal vertex cover. So we might think that $G_{n,r}$ is always unmixed but not all the graph of $G_{n,r}$ are unmixed. Now we characterize the graphs.

\begin{remark}
Note that for $n\leq3$ there are only two graphs $G_{2,0}$ and $G_{3,0},$ one is just an edge and another one is a triangle. So we can assume $n\geq 4.$
\end{remark}

\begin{observation}
There will be no edge between two vertices $\{x_{i_j},x_{i_l}\}$ (where $i_j<i_l$) in the graph $G_{n,r}$  if and only if one of the following conditions hold
\begin{enumerate}\label{twoindependent}
  \item $i_{l}-i_{j}< r+1$

  \item $i_{l}-i_{j}> n-(r+1).$
\end{enumerate}
\end{observation}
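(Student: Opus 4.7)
The plan is to translate the non\mbox{-}adjacency condition into membership in the set $A_{i_j}$, then read off exactly which indices $i_l$ satisfy that membership by unwinding the cyclic convention. Since vertex $x_{i_j}$ is, by definition, adjacent to every vertex in $V(G_{n,r})\setminus(A_{i_j}\cup\{x_{i_j}\})$, the edge $\{x_{i_j},x_{i_l}\}$ is absent (with $i_j<i_l$, so the two vertices are distinct) if and only if $x_{i_l}\in A_{i_j}$.

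Next I would split $A_{i_j}=\{x_{i_j-r},\ldots,x_{i_j-1}\}\cup\{x_{i_j+1},\ldots,x_{i_j+r}\}$ into its ``forward'' and ``backward'' halves and check each one against the hypothesis $i_j<i_l$. If $x_{i_l}$ lies in the forward half, then $i_l=i_j+k$ for some $k\in\{1,\dots,r\}$ (no wraparound is needed because $i_l\le i_j+r\le n$ modulo a harmless relabelling), giving $i_l-i_j\in\{1,\ldots,r\}$, equivalently $i_l-i_j<r+1$. If instead $x_{i_l}$ lies in the backward half, then the inequality $i_j<i_l$ forces the cyclic convention $x_{i_j-k}=x_{n+i_j-k}$ to kick in, so $i_l=n+i_j-k$ for some $k\in\{1,\ldots,r\}$, and therefore $i_l-i_j=n-k\in\{n-r,\ldots,n-1\}$, i.e. $i_l-i_j>n-(r+1)$.

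For the converse direction I would just reverse these two chains of implications: if $i_l-i_j<r+1$, then $i_l-i_j\in\{1,\ldots,r\}$, so $x_{i_l}=x_{i_j+(i_l-i_j)}\in A_{i_j}$ and no edge joins the two vertices; similarly, if $i_l-i_j>n-(r+1)$ then $n-(i_l-i_j)\in\{1,\ldots,r\}$ and $x_{i_l}=x_{i_j-(n-(i_l-i_j))}\in A_{i_j}$.

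No serious obstacle should arise; the only thing that needs care is the cyclic indexing, and in particular the fact that the two cases of the conclusion are mutually exclusive. This is guaranteed by the standing hypothesis $r\leq\bigl\lfloor n/2\bigr\rfloor-1$, which yields $r<n-r$ and so $r+1\le n-(r+1)$, ensuring that the intervals $\{1,\ldots,r\}$ and $\{n-r,\ldots,n-1\}$ for $i_l-i_j$ do not overlap. The entire argument is therefore a direct unpacking of the definition of $A_{i_j}$ together with this bound on $r$.
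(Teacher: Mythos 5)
Your argument is correct: the paper states this as an Observation with no written proof, treating it as immediate from the definition of $A_i$, and your case analysis (forward half giving $i_l-i_j<r+1$, backward half with the wraparound convention giving $i_l-i_j>n-(r+1)$, plus the reverse implications) is exactly the intended verification. The remark that $r\le\lfloor n/2\rfloor-1$ keeps the two index ranges disjoint is a sensible extra check, though not needed for the equivalence itself.
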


\begin{observation}\label{obs1}
The set of vertices $\{x_{i_1},x_{i_2},\ldots,x_{i_k}\}$ is an independent set if and only if there exist a solution for the following equations
\begin{equation}\label{independent}
  {i_p-i_q< r+1}\text{ or  }{i_p-i_q>n-(r+1)}\text{ for }p,q \in\{i_1,i_2,\ldots,i_k\},\text{ with }q<p.
  \end{equation}
%\begin{enumerate}
%  \item{ $ {i_2-i_1< r+1}$ or  ${i_2-i_1>n-(r+1)}$}
%  \item { $ {i_3-i_2< r+1}$ or  ${i_3-i_2>n-(r+1)}$}
%  \item { $ {i_3-i_1< r+1}$ or  ${i_3-i_1>n-(r+1)}$}
%  \item { $ {i_4-i_3< r+1}$ or  ${i_4-i_3>n-(r+1)}$}
%  \item { $ {i_4-i_2< r+1}$ or  ${i_4-i_2>n-(r+1)}$}
%  \item { $ {i_4-i_1< r+1}$ or  ${i_4-i_1>n-(r+1)}$}\\
%  \vdots
%  \item { $ {i_k-i_{k-1}< r+1}$ or  ${i_k-i_{k-1}>n-(r+1)}$}\\
%  \vdots
%  \item { $ {i_k-i_1< r+1}$ or  ${i_k-i_1>n-(r+1)}$}
%\end{enumerate}
\end{observation}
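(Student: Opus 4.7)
The plan is to derive this directly from Observation \ref{twoindependent}, which already classifies the non-edges of $G_{n,r}$ pairwise. By the very definition of an independent set, $\{x_{i_1},\ldots,x_{i_k}\}$ fails to be independent precisely when there exist indices $i_j<i_l$ in this set such that $\{x_{i_j},x_{i_l}\}\in E(G_{n,r})$. Thus independence is logically equivalent to the conjunction, over all pairs $i_j<i_l$ from the set, of the negated edge condition.

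Concretely, I would first fix the forward direction: assume the set is independent and pick any two indices $q<p$ from $\{i_1,\ldots,i_k\}$. Since $\{x_q,x_p\}\notin E(G_{n,r})$, Observation \ref{twoindependent} applied to this pair yields $p-q<r+1$ or $p-q>n-(r+1)$, which is exactly \eqref{independent}. For the converse, assume \eqref{independent} holds for every pair $q<p$ in the index set; then every pair $\{x_q,x_p\}$ satisfies the hypothesis of Observation \ref{twoindependent}, so no pair forms an edge, and the set is independent.

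There is no real obstacle here beyond bookkeeping with the cyclic indexing convention: one has to make sure that, having ordered the indices so that $q<p$, the two quantitative conditions $p-q<r+1$ and $p-q>n-(r+1)$ indeed cover all the ways in which the cyclic distance between $x_q$ and $x_p$ can fail to place $x_p$ in the complement of $A_q$. This is precisely what Observation \ref{twoindependent} certifies, so the statement reduces to a pairwise application of that observation together with the elementary fact that independence is a pairwise condition. Consequently I would present the proof as a short two-line argument citing Observation \ref{twoindependent}, with a brief remark noting that the independence condition is inherited pair by pair.
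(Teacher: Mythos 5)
Your proposal is correct and is exactly the reasoning the paper leaves implicit: the observation is stated without proof, being an immediate pairwise application of Observation \ref{twoindependent} together with the fact that independence of a vertex set is a condition on pairs. Nothing further is needed.
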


\begin{lem}
The cardinality of a maximal independent set in $G_{n,r}$ is at most $r+1$.
\end{lem}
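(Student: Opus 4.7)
The plan is to analyze an independent set $S=\{x_{i_1},\ldots,x_{i_k}\}$ with $1\le i_1<\cdots<i_k\le n$ via Observation \ref{obs1}, which says that for every pair $p>q$, either $i_p-i_q\le r$ or $i_p-i_q\ge n-r$. First I would apply this to the extremal pair $(x_{i_1},x_{i_k})$: if $i_k-i_1\le r$, all indices lie in the interval $[i_1,i_1+r]$ of size $r+1$, so $k\le r+1$ immediately; otherwise $i_k-i_1\ge n-r$, the ``wrap-around'' case.

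In the wrap-around case, introduce the cyclic gaps $g_p=i_{p+1}-i_p$ for $p<k$ and $g_k=n+i_1-i_k\le r$ (with $\sum g_p=n$). Observation \ref{obs1} on consecutive pairs forces $g_p\le r$ or $g_p\ge n-r$ for each $p$. If some $g_p\ge n-r$, the remaining $k-1$ gaps (each $\ge 1$) sum to at most $r$, giving $k\le r+1$. Otherwise every $g_p\le r$; define the cumulative sums $h_p=g_1+\cdots+g_p$. Observation \ref{obs1} applied to $(x_{i_1},x_{i_{p+1}})$ yields $h_p\in[0,r]\cup[n-r,n-1]$. Since $h_1\le r$ and $h_{k-1}\ge n-r$, the strictly increasing sequence must jump across the forbidden window $(r,n-r)$ in a single step $p^*$, so $g_{p^*}\ge n-2r$; combined with $g_{p^*}\le r$ this forces $n\le 3r$. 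Hence for $n>3r$ this case is vacuous and the bound $k\le r+1$ follows.

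The remaining range $2r+2\le n\le 3r$ (which forces $r\ge 2$) is handled by a bipartite count. Partition $\{0,\ldots,k-1\}$ into $A=\{p:h_p\le r\}$ and $B=\{p:h_p\ge n-r\}$, so $|A|+|B|=k$. Applying Observation \ref{obs1} to a cross pair $(x_{i_{p+1}},x_{i_{q+1}})$ with $p\in A$, $q\in B$ gives $h_q-h_p\le r$ or $h_q-h_p\ge n-r$. Substituting $b_q:=h_q-(n-r)\in[0,r-1]$, this rewrites as: either $b_q\ge h_p$ or $b_q\le h_p-(n-2r)$; equivalently each element of the shifted set $B$ forbids an interval of length $n-2r-1$ inside $[0,r]$ for elements of $A$. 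A direct accounting of allowed positions then yields $|A|+|B|\le r+1$.

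The hardest step will be this bipartite count. Extremal independent sets in the regime $2r+2\le n\le 3r$, such as $\{x_1,x_3,x_5\}\subseteq G_{6,2}$ (an equilateral configuration on $\Z/6\Z$), realize $|S|=r+1$ without being contained in any arc of length $r$, so the ``interval'' bound alone does not suffice; one must genuinely exploit the cross constraints between $A$ and $B$ to rule out a configuration with $|A|+|B|=r+2$.
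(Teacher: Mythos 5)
Your proof is correct, and it takes a genuinely different route from the paper's. The paper argues by a dichotomy: either the independent set is an arc of consecutive vertices, in which case $k\le r+1$ is immediate, or it contains two non-consecutive vertices, in which case a count of excluded neighbours ($x_{i_1}$ is adjacent to $n-(2r+1)$ vertices, and the remaining choices are claimed to exclude at least $k$ further vertices) is used to force $k\le r$. Your own witness $\{x_1,x_3,x_5\}\subseteq G_{6,2}$ is a maximal independent set of size $r+1$ that is not an arc, so the sharper bound $k\le r$ asserted in the paper's second case cannot hold throughout the window $2r+2\le n\le 3r$; your gap/partial-sum analysis, which isolates exactly this window, is the more robust argument, and identifying that window is real added value.

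The one step you leave unexecuted, the bipartite count, does close, and more cheaply than you anticipate. The cross constraint ($b_q\ge h_p$ or $b_q\le h_p-(n-2r)$, with $n-2r\ge 2$) says in particular that $h_p\ne b_q+1$ for every $p\in A$ and $q\in B$. Since $p\mapsto h_p$ injects $A$ into $\{0,\dots,r\}$, $q\mapsto b_q+1$ injects $B$ into $\{1,\dots,r\}$, and the two images are disjoint, we get $|A|+|B|\le r+1$. You do not need the forbidden intervals to be pairwise disjoint, nor to lie wholly inside $[0,r]$; the single forbidden point $b_q+1$, which always lies in $[1,r]$, already suffices. Moreover this injection uses only $n\ge 2r+2$, so your preliminary reductions (the large-gap subcase and the split $n>3r$ versus $n\le 3r$) can be discarded: once each $h_p$ is known to lie in $[0,r]\cup[n-r,n-1]$, the disjointness of $\{h_p:p\in A\}$ and $\{b_q+1:q\in B\}$ inside $\{0,\dots,r\}$ gives $k\le r+1$ in every case.
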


\begin{proof}
Observe that any set of consecutive $r+1$ vertices always give a maximal independent set. Let us assume that $V^{\prime}=\{x_{i_1},x_{i_2},\ldots,x_{i_k}\}$ is an independent set containing at least two non-consecutive vertices.
We will show that $k\leq r.$ Let $V_{i_j}$ denotes set of vertices which are not connected with $x_{i_j}.$ We choose the vertices $x_{i_1}$ and $x_{i_2}$ such a way that they are not connected. Then we have to choose $x_{i_j}(i_j\geq3)$ from $\displaystyle{\bigcap_{s=1}^{j-1} V_{i_s}}.$ So in this way after choosing $k$ vertices, we show that $\displaystyle{\bigcap_{j=1}^k V_{i_j}}$ will be empty set. Note that $x_{i_1}$ is connected with $n-(2r+1)$ vertices. Since at least two vertices in $V^{\prime}$ are not consecutive so remaining $k-1$ vertices are connected with at least other $k$ vertices apart from $n-(2r+1)$ vertices which are connected with $x_{i_1}.$ Therefore after choosing $k$ independent vertices, the number vertices remain to choose is at most $n-\{(n-2r-1)+k+(k-1)+1\}=2r+1-2k,$ which implies that the maximum value of $k$ is $r.$

 %After rearranging we may assume that $i_1<i_2<\ldots<i_k,$ so $i_k-i_1$ is at least $k-1.$ Since at least two vertices are not consecutive, then $i_k-i_1$ is at least $(k-1)+1=k.$ Now, assume by contradiction $k>r,$ then $i_k-i_1>r,$ which contradicts the assumption that $V^{\prime}$ is an independent set, by Observation \ref{obs1}.
\end{proof}

Note that the size of a maximal independent set containing at least two non-consecutive vertices is at most $r$ and the cardinality of a maximal independent set is $r+1$ if and only if all the vertices in that set are consecutive. So we can conclude the following corollary and remark.

\begin{cor}
  Maximal independent sets containing at least two non-consecutive vertices exist if and only if the graph $G_{n,r}$ is not unmixed.
\end{cor}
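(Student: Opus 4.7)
The plan is to derive the corollary as a direct consequence of the preceding lemma together with the remark right before it, which records that a maximal independent set of $G_{n,r}$ has cardinality exactly $r+1$ if and only if it consists of $r+1$ consecutive vertices (so any set containing two non-consecutive vertices has at most $r$ elements). Recall that unmixedness of $G_{n,r}$ is equivalent to saying all maximal independent sets (i.e.\ complements of minimal vertex covers) have the same size.

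For the forward implication, I would suppose that a maximal independent set $W$ exists containing two non-consecutive vertices. By the preceding remark $|W|\leq r$. Meanwhile, any $r+1$ consecutive vertices form an independent set (noted in the proof of the lemma), and it is maximal of size $r+1$. Hence $G_{n,r}$ has maximal independent sets of two different sizes and so is not unmixed.

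For the converse, I would assume $G_{n,r}$ is not unmixed and produce such a $W$. Since $\{x_1,\ldots,x_{r+1}\}$ is always a maximal independent set of size $r+1$ and this is also the maximum size given by the lemma, non-unmixedness forces the existence of some maximal independent set $W$ with $|W|<r+1$. If $W$ consisted only of consecutive vertices, say $W=\{x_i,x_{i+1},\ldots,x_{i+k}\}$ with $k<r$, then because the difference $(i+k+1)-i=k+1\leq r<r+1$ still satisfies the non-edge condition of Observation \ref{twoindependent}, the vertex $x_{i+k+1}$ could be added to $W$ while preserving independence, contradicting maximality of $W$. Hence $W$ must contain at least two non-consecutive vertices.

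Since both directions reduce to comparing the two possible ``types'' of maximal independent sets allowed by the lemma, there is no real obstacle; the only care needed is the verification that a proper consecutive string can always be extended, which is immediate from the non-adjacency criterion.
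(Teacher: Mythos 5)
Your proposal is correct and follows essentially the same route as the paper, which derives the corollary directly from the preceding lemma together with the observation that a maximal independent set has cardinality $r+1$ exactly when its vertices are consecutive, and has cardinality at most $r$ when it contains two non-consecutive vertices. The only addition is that you explicitly verify the extension step showing a consecutive independent set of fewer than $r+1$ vertices cannot be maximal, a point the paper asserts without detail; your verification via the non-adjacency criterion is valid.
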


\begin{remark}
 The independence number of the graph $G_{n,r}$ is $r+1.$
\end{remark}

In the next Lemma we give a class of unmixed graphs arising from $G_{n,r}.$

%\begin{defn}
%The distance between two vertices $x_{i}$ and $x_{j}$ is defined to be the minimum number of vertices between $x_i,x_j$ and $x_j,x_i.$
%\end{defn}
%Suppose $\{x_{i_1},\ldots,x_{i_r}\}$ is an independent set then it can be extended to a maximal independent set. Observe that if there exists a pair of vertices in the set such that distance is between them is at least $r,$ then the extended maximal independent set will contain at least two non-consecutive vertices.
\begin{lem}\label{unmixed}
The graph $G_{n,r}$ is unmixed for $r=1,2.$
\end{lem}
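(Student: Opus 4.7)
The plan is to invoke the preceding corollary, which reduces the statement to showing that for $r=1,2$, no maximal independent set of $G_{n,r}$ contains two non-consecutive vertices. Combined with the lemma that a maximal independent set with at least two non-consecutive vertices has size at most $r$, the case $r=1$ is essentially immediate, and the case $r=2$ will require a short ad-hoc extension argument.

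For $r=1$, the previous lemma forces any maximal independent set containing two non-consecutive vertices to have cardinality at most $1$, which is absurd. Hence every maximal independent set consists of consecutive vertices and has cardinality $r+1=2$, so $G_{n,1}$ is unmixed.

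For $r=2$, the previous lemma allows maximal independent sets containing two non-consecutive vertices to have cardinality at most $2$. So suppose for contradiction that $\{x_a,x_b\}$ (with $a<b$) is such a maximal independent set, where $x_a,x_b$ are non-consecutive, i.e., $b-a\notin\{1,n-1\}$. By the observation on non-adjacency (Observation \ref{twoindependent}), independence forces $b-a<r+1=3$ or $b-a>n-(r+1)=n-3$, which combined with non-consecutiveness leaves exactly two possibilities: $b-a=2$ or $b-a=n-2$. If $b-a=2$, then the three vertices $x_a,x_{a+1},x_{a+2}$ are consecutive; if $b-a=n-2$, then (cyclically) the three vertices $x_b,x_{b+1},x_a$ are consecutive. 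In either case, adjoining the middle vertex produces an independent set of three consecutive vertices (which is always independent since the pairwise index differences are at most $r=2$, contradicting the maximality of $\{x_a,x_b\}$). Therefore no such maximal independent set exists, all maximal independent sets have cardinality $r+1=3$, and $G_{n,2}$ is unmixed.

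The main (very mild) obstacle is bookkeeping the cyclic indexing when $b-a=n-2$ so that the third, consecutively-placed vertex is correctly identified; once this is unwound, the argument collapses to directly exhibiting a forbidden extension, and the lemma follows.
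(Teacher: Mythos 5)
Your proof is correct and takes essentially the same approach as the paper's: rule out maximal independent sets containing two non-consecutive vertices by adjoining the intermediate vertex to $\{x_a,x_b\}$ and contradicting maximality. You are in fact slightly more thorough than the paper, which for $r=2$ only treats the case $i_2-i_1=2$ and leaves the cyclic wraparound case $b-a=n-2$ (and the explicit justification of $r=1$ via the size bound) implicit.
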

\begin{proof}
   The proof is clear for $r=1.$ Now consider the case for $r=2.$ If the graph is not unmixed then there exists a maximal independent set containing two non-consecutive vertices. Suppose $\{x_{i_1},x_{i_2}\}$ is an independent set and the vertices are not consecutive, then $i_2-i_1<3$ and $i_2-i_1>1$ which implies $i_2=i_1+2.$ But then the set is not maximal independent set as the set $\{x_{i_1},x_{i_1+1},x_{i_2}\}$ is independent. Therefore the graph is unmixed.
\end{proof}

The next examples shows that $G_{n,r}$ is not always unmixed.
\begin{example}

Consider the graph corresponding to $n=11,r=4.$ Note that the set $\{x_3,x_6,x_7,x_{10}\}$ is a solution of (\ref{independent}), therefore the set is an independent set and also it is maximal independent set. But the graph is not unmixed as $\{x_3,x_4,x_5,x_6,x_7\}$ is also an maximal independent set.

\end{example}

\begin{lem}\label{gapfree}
The graph $G_{n,r}$ is gap-free for $r=1,n\geq 5$ and $r=2,n\geq9.$

\end{lem}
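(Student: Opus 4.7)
The plan is to prove gap-freeness by contradiction. Recall that $G$ is \emph{gap-free} if there do not exist two disjoint edges $\{x_a,x_b\}$ and $\{x_c,x_d\}$ with no third edge joining a vertex of the first to a vertex of the second. By construction, two distinct vertices $x_i, x_j$ are non-adjacent in $G_{n,r}$ precisely when the cyclic distance $\min(|i-j|,\, n-|i-j|)$ is at most $r$; write $N^c(x_i) = \{x_{i-r},\ldots,x_{i-1},x_{i+1},\ldots,x_{i+r}\}$ for the set of non-neighbors of $x_i$ in $G_{n,r}$. If $\{x_a,x_b\}, \{x_c,x_d\}$ formed a gap, all four cross-pairs would be non-edges, forcing $\{x_c,x_d\} \subseteq N^c(x_a) \cap N^c(x_b)$, while the cyclic distances $a$-to-$b$ and $c$-to-$d$ both exceed $r$.

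For $r=1$, the sets $N^c(x_a) = \{x_{a-1}, x_{a+1}\}$ and $N^c(x_b) = \{x_{b-1}, x_{b+1}\}$ have only two elements each, so with $c \neq d$ the inclusion forces $\{x_{a-1}, x_{a+1}\} = \{x_{b-1}, x_{b+1}\}$. The trivial matching $a-1 \equiv b-1$, $a+1 \equiv b+1 \pmod n$ yields $a = b$, contradiction. The non-trivial matching $a-1 \equiv b+1$, $a+1 \equiv b-1$ gives $b \equiv a+2 \equiv a-2 \pmod n$, so $n \mid 4$; this is impossible for $n \geq 5$.

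For $r=2$, the non-neighbor sets each have four elements. Writing $x_b = x_{a+k}$ with $3 \leq k \leq n-3$, I would compute $N^c(x_a) \cap N^c(x_{a+k})$ by enumerating the sixteen congruences $a+k+\epsilon_1 \equiv a+\epsilon_2 \pmod n$ with $\epsilon_1, \epsilon_2 \in \{-2,-1,1,2\}$. These reduce to $k \equiv \epsilon_2 - \epsilon_1 \pmod n$, and filtering by $3 \leq k \leq n-3$ leaves only $k \in \{3, 4, n-4, n-3\}$. For $k = 3$ the intersection is $\{x_{a+1}, x_{a+2}\}$, whose vertices are at cyclic distance $1$ and so do not span an edge of $G_{n,2}$; symmetrically, $k = n-3$ yields $\{x_{a-2}, x_{a-1}\}$. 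For $k \in \{4, n-4\}$ the intersection is a singleton and cannot host two distinct vertices $c \neq d$. Every case contradicts the existence of the claimed gap.

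The main technical obstacle is the cyclic bookkeeping in the $r=2$ case, and in particular verifying that $n \geq 9$ is exactly the threshold that keeps the four values $3, 4, n-4, n-3$ pairwise distinct inside $[3, n-3]$. Indeed, for $n = 8$ the collision $4 = n-4$ causes the intersection to become $\{x_{a-2}, x_{a+2}\}$, whose vertices are at cyclic distance $4 > 2$ and therefore span an edge; the resulting configuration $\{x_a, x_{a+4}\}, \{x_{a-2}, x_{a+2}\}$ is a genuine gap, demonstrating that the hypothesis $n \geq 9$ is sharp.
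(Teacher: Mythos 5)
Your proof is correct, and it is organized differently from the paper's. The paper argues by contrapositive on the four cross-pairs: it normalizes $i=1$, splits into cases according to the relative order of the indices $j,k,l$, and derives index inequalities showing that one of the assumed non-edges must in fact be an edge. You instead exploit that the non-neighborhood $N^c(x_a)$ of any vertex is the explicit $2r$-element set $\{x_{a-r},\ldots,x_{a-1},x_{a+1},\ldots,x_{a+r}\}$, reduce a putative gap to the containment $\{x_c,x_d\}\subseteq N^c(x_a)\cap N^c(x_b)$, and then compute that intersection for every admissible cyclic distance $k$ between $a$ and $b$: it is either too small to hold two vertices or consists of two consecutive vertices, which never span an edge. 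This buys a uniform treatment of $r=1$ and $r=2$ in one framework (rather than separate subcases on index orderings), makes the role of the hypotheses transparent (the congruence $n\mid 4$ explains $n\geq 5$ for $r=1$, and the collision $4=n-4$ explains $n\geq 9$ for $r=2$), and yields as a byproduct the explicit gap $\{x_a,x_{a+4}\},\{x_{a-2},x_{a+2}\}$ in $G_{8,2}$ showing the bound is sharp --- something the paper's proof does not exhibit. The one point to state explicitly, which you address, is that for $n\geq 9$ no further coincidences modulo $n$ enlarge the intersections (all would require $n\leq 8$), so the enumeration of the sixteen congruences is exhaustive.
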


\begin{proof}
  Let $x_ix_j$ and $x_kx_l$ are edges where $i<j$ and $k<l.$ We can assume that $i<l.$ We will show that if $x_ix_k,x_ix_l,x_jx_k$ are not edges then $x_jx_l$ is an edge. Without loss of generality, we can assume that $i=1.$\\
  \textbf{Case 1:} First assume that $j<l.$ Then there are two possibilities, one is $1<k<j<l$ and the other is $1<j<k<l.$\\
  \textbf{Subcase 1:} Let us assume that $1<k<j<l.$ As $l\leq n$ and $j\geq r+2$ (since $x_1x_j$ is an edge), so we have
   \begin{equation}\label{eq1}
   l-j\leq n-(r+2)\leq n-(r+1).
    \end{equation}
   Now $x_1x_j$ is edge and $x_1x_k$ not an edge imply that $k\leq r+1.$ Also $x_1x_j$ edge and $x_1x_l$ not edge imply that \begin{equation}\label{eq*}
       l-1>n-(r+1) \text{ i.e. }
       l\geq n-r+1.
      \end{equation}

    Since $x_jx_k$ is not an edge, we have either $j-k<r+1$ or $j-k>n-(r+1).$ But note that $j-k>n-(r+1)$ is not possible for $r=1,2.$ Suppose possible then $j-k>n-3,$ which implies that $j\geq k+n-2,$ also $k\geq 2,$ hence $j\geq n$ which contradicts the fact that $j<l\leq n.$ So we need to prove only for $j-k<r+1.$ Then $j\leq k+r\leq 2r+1,$ which together with (\ref{eq*}) give $l-j\geq n-3r.$ Observe that for $r=1,n\geq 5$ and $r=2,n\geq9$ the inequality $l-j\geq r+1$ satisfied, hence by (\ref{eq1}) $x_jx_l$ is an edge. \\

  \textbf{Subcase 2:} Now consider the case $1<j<k<l$ and also $x_1x_j,$ $x_kx_l$ are edges. Assume that $x_1x_k$ is not an edge then $k\geq n-r+1,$ which implies that $l-k\leq n-(n-r+1)=r-1.$ Hence we arrived at a contradiction that $x_kx_l$ is an edge. Then $x_1x_k$ is an edge.\\

  \textbf{Case 2:} If $l<j,$ then $1<k<l<j.$ Since $r\leq 2,$ if $x_1x_l$ is not edge then $k$ has to be $2$ and $l$ has to be $3,$ but this contradicts that $x_kx_l$ is an edge. Thus $x_1x_l$ is an edge.

\end{proof}

\section{Regularity comparison}
The main aim of this section is to compare the regularities of the ordinary powers and symbolic powers of $I=I(G_{n,r}).$ Using the local cohomology technique we show that Minh's Conjecture is true for the graph $G_{n,r}.$

\begin{thm}\label{IT}
Let $I=I(G_{n,r})$ be the edge ideal for the graph $G_{n,r},$ then $I^t=(L(t)).$
\end{thm}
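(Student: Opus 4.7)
The plan is to establish $I^t=(L(t))$ by double inclusion. The forward inclusion $I^t\subseteq (L(t))$ is immediate: any monomial generator of $I^t$ is a product $m\cdot e_{i_1}\cdots e_{i_t}$ with the $e_{i_k}$ edges of $G_{n,r}$, so it has degree at least $2t$, and for every minimal vertex cover $V'$ each $e_{i_k}$ contributes at least one unit of weight, so $W_{V'}\ge t$. Hence every such generator lies in $L(t)$.

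For the reverse inclusion $(L(t))\subseteq I^t$, I would take $x^{\underline{a}}\in L(t)$, fix an optimal form
\[
x^{\underline{a}}=x^{\underline{a}'}\prod_{j}e_j^{b_j},\qquad b:=\sum_j b_j\ \text{maximal},
\]
and observe that $x^{\underline{a}}\in I^t$ is equivalent to $b\ge t$, so the task reduces to proving $b\ge t$. A structural fact I would record first is that the set of ancillary vertices $\operatorname{anc}:=\{x_i:a_i'>0\}$ is an independent set of $G_{n,r}$: otherwise an adjacent ancillary pair $x_i,x_j$ would allow the swap $x_i^{a_i'}x_j^{a_j'}\mapsto x_i^{a_i'-1}x_j^{a_j'-1}(x_ix_j)$, contradicting the maximality of $b$.

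Now assume for contradiction that $b<t$. Then $\sum_i a_i'=\deg(x^{\underline{a}})-2b\ge 2(t-b)>0$, so $\operatorname{anc}$ is nonempty. The plan is to extend $\operatorname{anc}$ to a maximal independent set $S^*$ of $G_{n,r}$ and take the minimal vertex cover $V':=V\setminus S^*$. Since $V'\cap\operatorname{anc}=\emptyset$, the vertex weight reduces to
\[
W_{V'}(x^{\underline{a}})=\sum_{j}|V'\cap e_j|\,b_j.
\]
If $S^*$ can be arranged so that every edge $e_j$ of the optimal form has at least one endpoint in $S^*$, then since $S^*$ is independent we must have $|V'\cap e_j|=1$ for every $j$, whence $W_{V'}(x^{\underline{a}})=b<t$, contradicting $x^{\underline{a}}\in L(t)$.

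The crux of the argument, and the step where I expect the bulk of the effort, is the existence of such an $S^*$: a maximal independent set containing $\operatorname{anc}$ and meeting every $e_j$. This is where the specific combinatorics of $G_{n,r}$ must be used. In the unmixed regime the maximal independent sets are precisely the arcs $\{x_i,x_{i+1},\ldots,x_{i+r}\}$ of $r+1$ cyclically consecutive vertices (cf.\ Section~3); coupled with the fact that every edge of $G_{n,r}$ has cyclic span between $r+1$ and $n-r-1$, the freedom to slide such an arc along the cycle (subject to containing $\operatorname{anc}$), combined with a pigeonhole argument on the distribution of the endpoints of the $e_j$'s around the cycle, should produce an arc that meets every $e_j$. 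In the non-unmixed regime, the extra maximal independent sets with non-consecutive vertices (classified in Section~3) must be brought in and the cases separated accordingly. Once the correct $S^*$ is exhibited, the contradiction above closes the argument.
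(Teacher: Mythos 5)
Your overall architecture matches the paper's: both reduce the hard inclusion $(L(t))\subseteq I^t$ to producing, for a monomial in optimal form with $b(x^{\underline a})<t$, a minimal vertex cover $V'$ disjoint from the ancillaries and containing exactly one endpoint of each edge factor, so that $W_{V'}(x^{\underline a})=b(x^{\underline a})<t$. Your easy inclusion, the observation that the ancillary vertices form an independent set, and the reduction to the nonempty-ancillary case are all correct and agree with what the paper does (its Case 1 is your observation that $b<t$ forces $\sum_i a_i'>0$).

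The genuine gap is the existence of $S^*$. You correctly identify it as ``the crux'' and then do not prove it: ``the freedom to slide such an arc along the cycle \ldots combined with a pigeonhole argument \ldots should produce an arc that meets every $e_j$'' is a hope, not an argument, and it is exactly the step that occupies almost all of the paper's proof. More importantly, the mechanism you propose is the wrong one: the existence of a maximal independent arc containing $\operatorname{anc}$ and meeting every $e_j$ is \emph{not} a consequence of how the endpoints of the $e_j$ are distributed around the cycle (no pigeonhole on the multiset of edges alone can work), but a consequence of optimality of the factorization. The paper's Case 2 proves the contrapositive: it lists the candidate covers $L$ (complements of the arcs of $r+1$ consecutive vertices containing all ancillaries), assumes each one contains an edge of the form, and then explicitly re-factors the monomial --- absorbing two ancillary vertices and chaining the offending edges as $(x_1x_{k_{(r+2)1}})(x_ix_{k_{(i+1)2}})(x_{k_{(i+1)1}}x_{k_{(i+2)2}})\cdots$ --- into a product with strictly more edge factors, contradicting the maximality of $b(x^{\underline a})$. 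So optimality must be invoked a second time, beyond the independence of the ancillaries; your proposal never uses it again after that first observation. Two smaller points: the case of $r+1$ ancillaries needs separate (easier) treatment, since then $\operatorname{anc}$ is already a maximal independent arc and one must argue directly that no edge of the form lies in its complement; and the non-unmixed regime requires no extra work, because the consecutive arcs of $n-(r+1)$ vertices are minimal vertex covers of $G_{n,r}$ regardless of unmixedness, and those are the only covers the argument ever uses.
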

\begin{proof}
Observe that any set of consecutive $n-(r+1)$ vertices forms a minimal vertex cover for $G_{n,r}$. As the relation $I^t\subseteq (L(t))$ always holds, we need to show that $(L(t))\subseteq I^t.$
We divide the proof into 3 cases depending on the number of ancillaries present in the optimal form of the monomial.  Observe that there can be at most $(r+1)$ ancillaries in the optimal form of a monomial.\\

\textbf{Case 1:} Let $x^{\underline{a}}\in (L(t)).$ Suppose there is no ancillary or only one ancillary with degree 1 is present, then we can write $x^{\underline{a}}=x_i^{c_i}p(x),$ where $c_i\leq 1$ and $\deg(p(x))\geq 2t-1.$ Note that $p(x)$ can be factored into edges only, which implies that $x^{\underline{a}}\in I^t$ and hence in this case $(L(t))\subseteq I^t.$  \\

We prove case 2 and 3 by contrapositive. Let $x^{\underline{a}}=x_1^{a_1}\cdots x_n^{a_n}\notin I^t$ which implies that $b(x^{\underline{a}})< t,$ now if we can find a minimal vertex cover $V^{\prime}$ such that $W_{V^{\prime}}(x^{\underline{a}})\leq b(x^{\underline{a}})$, then $x^{\underline{a}}\notin (L(t)).$

\textbf{Case 2:} Assume that $i$ ancillaries are present in the optimal form, where $2\leq i\leq r.$ Let $x_1^{c_1},\cdots,x_i^{c_i}$ are the ancillaries. Consider the following set of minimal vertex covers  $ L=\{\{x_{i+1},\cdots, x_{n-r+i-1}\},\cdots, \\ \{x_{r+2},\ldots,x_{n}\}\}.$ The aim here is to find a minimal vertex cover $V^{\prime}$ from the set $L$ such that $W_{V^{\prime}}(x^{\underline{a}})\leq b(x^{\underline{a}}).$ Our claim is that we can find a minimal vertex cover $V^{\prime}$ from the set $L$ such that there will be no edge between the vertices of $V^{\prime}$  in the optimal form of the monomial and if so then $V^{\prime}$ will be the suitable candidate for minimal vertex cover. Thus it is enough to prove that if at least one edge is present from each vertex set of $L$ then the monomial will not be optimal form.
Without loss of generality, we can write the monomial in the following way
$x^{\underline{a}}=x_1^{c_1}{x_2}^{c_2}\cdots {x_i}^{c_i}(x_{k_{(i+1)1}}x_{k_{(i+1)2}})(x_{k_{(i+2)1}}x_{k_{(i+2)2}})\cdots (x_{k_{(r+2)1}}x_{k_{(r+2)2}})$ and assume that it is the optimal form, where $x_1^{c_1},{x_2}^{c_2},\ldots, {x_i}^{c_i}$ are the ancillaries and $(x_{k_{(i+1)1}}x_{k_{(i+1)2}}),\cdots, (x_{k_{(r+2)1}} x_{k_{(r+2)2}})$ are the edges between the vertices of $ \{x_{i+1},\cdots, x_{n-r+i-1}\},\cdots, \{x_{r+2},\ldots,x_{n}\}$ respectively. If two edges belong to the same vertex set of $L$ then we will keep their positions according to the sum of their indices. For example if $(x_{k_{(i+1)1}}x_{k_{(i+1)2}}),(x_{k_{(i+2)1}}x_{k_{(i+2)2}})$ are the edges from the vertex set $\{x_{i+1},\cdots, x_{n-r+i-1}\}$ and suppose that $k_{(i+1)1}+k_{(i+1)2}\leq k_{(i+2)1}+k_{(i+2)2}$ then we keep the edge $(x_{k_{(i+1)1}}x_{k_{(i+1)2}})$ before $(x_{k_{(i+2)1}}x_{k_{(i+2)2}}).$ So by rearranging we can assume that if they belong to same the vertex set then $k_{(i+1)1}+k_{(i+1)2}\leq k_{(i+2)1}+k_{(i+2)2}.$ From Observation \ref{twoindependent}, it follows that $x_ix_j(i<j)$ is an edge if and only if $r+1\leq j-i\leq n-(r+1).$ Therefore the following equations be satisfied

%\vspace{-.95cm}

\begin{align} \label{eq:special}
i+1\leq k_{(i+1)1}<k_{(i+1)2}\leq n-r+i-1 &\text{ and }  r+1\leq k_{(i+1)2}-k_{(i+1)1}\leq n-(r+1)\nonumber \\
  i+2\leq k_{(i+2)1}<k_{(i+2)2}\leq n-r+i & \text{ and } r+1\leq k_{(i+2)2}-k_{(i+2)1}\leq n-(r+1)\\ \nonumber \vdots  \\
r+2\leq k_{(r+2)1}<k_{(r+2)2}\leq n &\text{ and } r+1\leq k_{(r+2)2}-k_{(r+2)1}\leq n-(r+1)\nonumber.
\end{align}

Now
\begin{align*}
x^{\underline{a}}&= x_1^{c_1}{x_2}^{c_2}\cdots {x_i}^{c_i}(x_{k_{(i+1)1}}x_{k_{(i+1)2}})(x_{k_{(i+2)1}}x_{k_{(i+2)2}})\cdots (x_{k_{(r+2)1}}x_{k_{(r+2)2}})\\
&= x_1^{c_1-1}{x_2}^{c_2}\cdots {x_{i-1}}^{c_{i-1}}{x_i}^{c_i-1}x_1x_i(x_{k_{(i+1)1}}x_{k_{(i+1)2}})(x_{k_{(i+2)1}}x_{k_{(i+2)2}})\cdots (x_{k_{(r+2)1}}x_{k_{(r+2)2}})\\
&=x_1^{c_1-1}{x_2}^{c_2}\cdots {x_{i-1}}^{c_{i-1}}{x_i}^{c_i-1}(x_1x_{k_{(r+2)1}})(x_ix_{k_{(i+1)2}})(x_{k_{(i+1)1}}x_{k_{(i+2)2}})\cdots (x_{k_{(r+1)1}}x_{k_{(r+2)2}}).
\end{align*}

It is enough to show that all the pairs of the last expression of $x^{\underline{a}}$ are edges.
From the equations(\ref{eq:special}), it follows that $r+1\leq k_{(r+2)1}-1$ and $k_{(r+2)1}-1\leq k_{(r+2)2}-(r+1)\leq n-(r+1).$ Hence $r+1\leq k_{(r+2),1}-1\leq n-(r+1).$
Therefore $x_1x_{k_{(r+2)1}}$ is an edge.
Again from the equations (\ref{eq:special}), we can write $k_{(i+1)2}\geq k_{(i+1)1}+(r+1)\geq (i+1)+(r+1),$ which implies that
 \begin{equation}\label{(i+1)2}
   k_{(i+1)2}-i\geq r+1.
 \end{equation}
Also note that $k_{(i+1)2}-i\leq n-(r+1).$ Therefore $r+1\leq k_{(i+1)2}-i\leq n-{r+1},$ hence $x_ix_{k_{(i+1)2}}$ is an edge. Next we show that $x_{k_{(i+1)1}}x_{k_{(i+2)2}}$ is an edge. First assume that $(x_{k_{(i+1)1}}x_{k_{(i+1)2}}),\\ (x_{k_{(i+2)1}}x_{k_{(i+2)2}})$ are the edges from the vertex set $\{x_{i+1},\cdots, x_{n-r+i-1}\}$ then $k_{(i+1)1}+k_{(i+1)2}\leq k_{(i+2)1}+k_{(i+2)2}.$ Then either $k_{(i+1)1}\leq k_{(i+2)1}$ or $k_{(i+1)2}\leq k_{(i+2)2}$  hold which implies that $k_{(i+2)2}-k_{(i+1)1}\geq k_{(i+2)2}-k_{(i+2)1}\geq r+1$ or $k_{(i+2)2}-k_{(i+1)1}\geq k_{(i+1)2}-k_{(i+1)1}\geq r+1.$ Also observe that the maximum difference between the indices from the vertex set of $L$ is $n-r-2$ hence $(x_{k_{(i+1)1}}x_{k_{(i+2)2}})$ is an edge. The remaining case is that if they do not belong to the same vertex set, in that case $k_{(i+2)2}=n-r+i,$ then it is obvious that $(x_{k_{(i+1)1}}x_{k_{(i+2)2}})$ is an edge as $k_{(i+2)2}-k_{(i+1)1}\leq n-(r+1).$ In the same procedure, we can show that other pairs are also edges. Then $x^{\underline{a}}$ is not in optimal form.\\

If only one ancillary is present, then $i=1.$ Assume that $x_1^{c_1}$ is the ancillary with $c_1\geq 2.$ Then take $x_i=x_1$ and follow the process described above. In this case we have to show only that $x_1x_{k_{(i+1)2}}$ is an edge. From equation (\ref{(i+1)2}) it follows that  $k_{(i+1)2}-1\geq r+1.$ By putting $i=1$ in equations (\ref{eq:special}) we get that $k_{(i+1)2}\leq n-r,$ i.e. $k_{(i+1)2}-1\leq n-(r+1).$ Therefore $x_1x_{k_{(i+1)2}}$ is an edge. Hence we can find a minimal vertex cover $V^{\prime}$ from the set $L$ such that no edges of $V^{\prime}$ will be present in the optimal form of $x^{\underline{a}},$ so $W_{V^{\prime}}(x^{\underline{a}})\leq b(x^{\underline{a}})<t.$\\

\textbf{Case 3:} Let us assume that $(r+1)$ ancillaries are present in the optimal form and $x_1^{c_1},\ldots,x_{r+1}^{c_{r+1}}$ are the ancillaries. Then we can write the optimal form of the monomial in the following way, $x^{\underline{a}}=x_1^{c_1}\cdots x_{r+1}^{c_{r+1}}e_{1(r+2)}^{b_{1(r+2)}}\cdots e_{1(n-r)}^{b_{1(n-r)}}e_{2(r+3)}^{b_{2(r+3)}}\cdots e_{2(n-r+1)}^{b_{2(n-r+1)}}\cdots  e_{(r+1)(2r+2)}^{b_{(r+1)(2r+2)}}\cdots e_{(r+1)n}^{b_{(r+1)n}}.$ Consider the vertex set $V^{\prime}=\{x_{r+2},\ldots,x_n\}.$ Clearly $V^{\prime}$ is a minimal vertex cover. Observe that in the optimal form of $x^{\underline{a}}$ there is no edge between the vertices of ${V^{\prime}}.$ Thus $W_{V^{\prime}}(x^{\underline{a}})=a_{r+2}+\cdots+a_n$ and $b(x^{\underline{a}})=b_{1(r+2)}+\cdots+b_{1(n-r)}+b_{2(r+3)}+\cdots+b_{2(n-r+1)}+\cdots+b_{(r+1)(2r+2)} + \cdots+ b_{(r+1)n}.$ Now $b_{1(r+2)}=a_{r+2},b_{1(r+3)}+b_{2(r+3)}=a_{r+3}, b_{1(r+4)}+b_{2(r+4)}+b_{3(r+4)}=a_{(r+4)}$,\ldots,$b_{1(n-r)}+b_{2(n-r)}+\cdots+b_{(n-2r-1)(n-r)}=a_{n-r},\cdots,b_{(r+1)n}=a_n.$ Therefore $W_{V^{\prime}}(x^{\underline{a}})=b(x^{\underline{a}})< t.$

\end{proof}

% For example if the edge $x_{i+1}x_{n-r+i-1}$ is missing then we choose the minimal vertex cover $V=\{x_{i+1},\ldots,x_{n-r+i-1}\}.$ Then it is easy to check that $W_V(x^{\underline{a}})=b(x^{\underline{a}}).$
%suppose all the edges are present then we will prove that the form is not an optimal form. Then we can write \begin{align*}
%                         x^{\underline{a}}& \leq x_1^{c_1}\cdots x_i^{c_i}(x_{i+1}x_{n-r+i-1})(x_{i+2}x_{n-r+i})\cdots (x_{r+1}x_{n-1})(x_{r+2}x_n)\\
%                         & = x_1^{c_1-1}\cdots x_i^{c_i-1}\underline{(x_1x_i)}(x_{i+1}x_{n-r+i-1})(x_{i+2}x_{n-r+i})\cdots (x_{r+1}x_{n-1})(x_{r+2}x_n)\\
%                          & = x_1^{c_1-1}\cdots x_i^{c_i-1}\underline{(x_{i+1}x_n)}(x_{1}x_{r+2})(x_ix_{n-r+i-1})(x_{i+2}x_{n-r+i})(x_{i+3}x_{n-r+i+1})\cdots (x_{r+1}x_{n-1})\\
%                          & = x_1^{c_1-1}\cdots x_i^{c_i-1}\underline{(x_{i+2}x_n)}(x_{1}x_{r+2})(x_ix_{n-r+i-1})(x_{i+1}x_{n-r+i})(x_{i+3}x_{n-r+i+1})\cdots (x_{r+1}x_{n-1})\\
%                          & = x_1^{c_1-1}\cdots x_i^{c_i-1}\underline{(x_{i+3}x_n)}(x_{1}x_{r+2})(x_ix_{n-r+i-1})(x_{i+1}x_{n-r+i})(x_{i+2}x_{n-r+i+1})\cdots (x_{r+1}x_{n-1})
%\end{align*}
%In some stage $x_nx_{i+3}$ will be an edge. So we can find a minimal vertex cover $V$ such that $W_V{x^{\underline{a}}}=b(x^{\underline{a}})<t,$ therefore $x^{\underline{a}}\notin t.$ So $I^t=(L(t)).$

 \begin{lem} Let $H_{n,r}$ be the class of unmixed graphs of the form $G_{n,r}$ and let $I=I(G)$ be the edge ideal of some graph in $H_{n,r}.$ Then we have

\begin{equation*}
\begin{split}
D(t) = \{x_1^{a_1}x_2^{a_2}\cdots x_n^{a_n}& \mid a_{i_1}+a_{i_2}+\cdots+a_{i_{n-(r+1)}}\geq t \mbox{ for consecutive $n-(r+1)$}\mbox{ tuple with } \\ &  \{i_1,i_2,\ldots,i_{n-(r+1)}\}\subseteq\{1,2,\ldots,n\}
              \mbox{ and } a_1+\cdots +a_n\leq 2t-1\}
\end{split}
\end{equation*}
\end{lem}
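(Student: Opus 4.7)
The plan is to simply unwind the definition of $D(t)$ using the characterization of the minimal vertex covers of $G_{n,r}$ in the unmixed case. Recall from Janssen et al.\ that
$$D(t) = \{x^{\underline{a}} \mid \deg(x^{\underline{a}}) < 2t \text{ and } W_{V'}(x^{\underline{a}}) \geq t \text{ for every minimal vertex cover } V'\}.$$
So the whole statement reduces to showing that, when $G_{n,r}$ is unmixed, the minimal vertex covers of $G_{n,r}$ are exactly the $n$ sets of the form $\{x_{i_1},\ldots,x_{i_{n-(r+1)}}\}$ where the indices are consecutive (cyclically).

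First I would record that for any $G_{n,r}$ the complement of any set of $r+1$ cyclically consecutive vertices is a vertex cover: if $V' = \{x_{j+1},\ldots,x_{j+n-(r+1)}\}$ (indices mod $n$), then every edge $x_ax_b$ of $G_{n,r}$ satisfies $r+1 \leq |a-b| \leq n-(r+1)$ by Observation \ref{twoindependent}, which forces at least one of $a,b$ to lie in $V'$. Hence these $n$ sets are always vertex covers, and they are clearly minimal because dropping any vertex $x_i$ leaves the edge $\{x_i, x_{i+r+1}\}$ uncovered.

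Next, for the converse I would invoke the structural results already proved. By the Corollary following the lemma that bounds the independence number, when $G_{n,r}$ is unmixed every maximal independent set consists of $r+1$ consecutive vertices, i.e.\ is of the form $\{x_j, x_{j+1},\ldots, x_{j+r}\}$. Since the minimal vertex covers of any graph are precisely the complements of the maximal independent sets, the minimal vertex covers of the unmixed $G_{n,r}$ are precisely the $n$ consecutive-index sets described above. This is the only step with any real content, and it is the step I expect to have to be careful about — ensuring the ``cyclically consecutive'' convention matches what the lemma writes down (the lemma allows any consecutive $n-(r+1)$-tuple $\{i_1,\ldots,i_{n-(r+1)}\}\subseteq\{1,\ldots,n\}$, which under the cyclic convention of Section 3 covers exactly these $n$ sets).

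With the minimal vertex covers identified, the rest is a direct translation. For such a consecutive cover $V' = \{x_{i_1},\ldots,x_{i_{n-(r+1)}}\}$, the weight $W_{V'}(x^{\underline{a}})$ is by definition $a_{i_1}+\cdots+a_{i_{n-(r+1)}}$. Therefore the condition ``$W_{V'}(x^{\underline{a}}) \geq t$ for every minimal vertex cover $V'$'' becomes exactly ``$a_{i_1}+\cdots+a_{i_{n-(r+1)}} \geq t$ for every consecutive $n-(r+1)$-tuple''. The degree condition $\deg(x^{\underline{a}}) < 2t$ is the same as $a_1+\cdots+a_n \leq 2t-1$. Combining these two conditions gives precisely the displayed description of $D(t)$, finishing the proof.
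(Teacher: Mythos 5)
Your proposal is correct and follows essentially the same route as the paper: identify the minimal vertex covers of an unmixed $G_{n,r}$ as exactly the $n$ sets of cyclically consecutive $n-(r+1)$ vertices, then unwind the definition of $D(t)$. The only difference is that you explicitly justify that identification (via complements of maximal independent sets and the earlier corollary on unmixedness), whereas the paper simply asserts it, relying on the Section 3 discussion.
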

\begin{proof}
  The set of all consecutive $n-(r+1)$ vertices forms the set of minimal vertex cover for $H_{n,r}.$ Let $x^{\underline{a}}={x_1^{a_1}x_2^{a_2}\cdots x_n^{a_n}\in D(t) }$ then $a_1+\dots +a_n\leq 2t-1$ and since $x^{\underline{a}}\in I^{(t)},$ so for any minimal vertex cover $V,$ $W_V(x^{\underline{a}})\geq t$ which implies that  $a_{i_1}+a_{i_2}+\dots+a_{i_{n-(r+1)}}\geq t \mbox{ for all consecutive $n-(r+1)$} \\ \mbox{tuple with }  \{i_1,i_2,\dots,i_{n-(r+1)}\}\subseteq\{1,2,\dots,n\}$. Hence the result follows.
\end{proof}

 \begin{lem}\label{containment}
Let $I=I(G_{n,r})$ be the edge ideal of a graph $G_{n,r},$ and let $\m$ be the homogeneous maximal ideal of $R.$ Then ${\m}^{t}I^{(t)}\subseteq I^t.$
\end{lem}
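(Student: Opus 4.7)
The plan is to exploit the two facts that have already been established: the Janssen--et al.\ decomposition $I^{(t)}=(L(t))+(D(t))$, which tells us that every monomial in $I^{(t)}$ either lies in $L(t)$ or in $D(t)$, and the identification $I^t=(L(t))$ from Theorem \ref{IT}. Since $\mathfrak{m}^t I^{(t)}$ and $I^t$ are both monomial ideals, it is enough to check, for every monomial $f\in I^{(t)}$ and every monomial $g$ of degree $t$, that $gf\in I^t$.

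First I would dispose of the easy case: if $f\in L(t)$, then $f\in I^t$ by Theorem \ref{IT}, and hence $gf\in I^t$. So the real content lies in showing that $gf\in I^t$ whenever $f\in D(t)$. The strategy here is to verify directly that the product $gf$ fulfills the two defining conditions of $L(t)$, namely (i) $\deg(gf)\geq 2t$ and (ii) $W_{V'}(gf)\geq t$ for every minimal vertex cover $V'$ of $G_{n,r}$.

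For (ii), observe that for any minimal vertex cover $V'$ we have $W_{V'}(gf)=W_{V'}(g)+W_{V'}(f)\geq W_{V'}(f)\geq t$, where the last inequality uses $f\in D(t)\subseteq I^{(t)}$. For (i), the key preliminary observation is that $\deg(f)\geq t$ whenever $f\in I^{(t)}$: picking any single minimal vertex cover $V'$, $\deg(f)=\sum_{i}a_i\geq \sum_{x_i\in V'}a_i=W_{V'}(f)\geq t$. Combined with $\deg(g)=t$, this gives $\deg(gf)\geq 2t$. Together, (i) and (ii) put $gf\in L(t)\subseteq (L(t))=I^t$ by Theorem \ref{IT}, which closes the argument.

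In other words, the hard work has already been done in Theorem \ref{IT}; the present lemma is essentially a formal consequence of that description of $I^t$ together with the fact that multiplication by an element of $\mathfrak{m}^t$ automatically forces a $D(t)$-type element (which fails only the degree bound, not the vertex-cover condition) into the $L(t)$ regime. I do not expect any genuine obstacle; the only point to watch is remembering that the degree bound $\deg(f)\geq t$ for an element of $I^{(t)}$ is an immediate consequence of the vertex-cover characterization, so that a single multiplier of degree $t$ suffices to reach the threshold $2t$.
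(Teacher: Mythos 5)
Your proof is correct and follows essentially the same route as the paper: both arguments observe that any monomial of $I^{(t)}$ has degree at least $t$, so multiplying by a degree-$t$ monomial pushes the product into $L(t)$, and then invoke Theorem \ref{IT} to conclude membership in $I^t$. Your write-up is slightly more detailed (you explicitly verify the vertex-cover weight condition and justify the bound $\deg(f)\geq t$, which the paper leaves implicit), but the underlying idea is identical.
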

\begin{proof}

Let $x^{\underline{a}}\in {\m}^t$ and $x^{\underline{b}}\in {I}^{(t)},$ then $\deg(x^{\underline{a}})\geq t$ as well as $\deg(x^{\underline{b}})\geq t.$ Therefore $\deg(x^{\underline{a}+\underline{b}})\geq 2t,$ which implies that $x^{\underline{a}+\underline{b}}\in (L(t)),$ hence by Theorem \ref{IT}, $x^{\underline{a}+\underline{b}}\in I^t.$ Thus the result follows.

\end{proof}

\begin{thm}\label{reginequality}
  Let $I=I(G_{n,r})$ be the edge ideal for the graph $G_{n,r}$. Then for all $t\geq 1$ we have $$\reg I^{(t)}= \reg I^t.$$
 \end{thm}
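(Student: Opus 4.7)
My plan is to use the short exact sequence
\[
0 \longrightarrow I^t \longrightarrow I^{(t)} \longrightarrow N \longrightarrow 0,
\]
with $N := I^{(t)}/I^t$, and extract the regularity comparison from its long exact sequence of local cohomology. By Lemma \ref{containment}, $\mathfrak{m}^t$ annihilates $N$, so $N$ is $\mathfrak{m}$-torsion; this gives $H^0_{\mathfrak{m}}(N) = N$ and $H^i_{\mathfrak{m}}(N) = 0$ for every $i \geq 1$.

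Since both $I^t$ and $I^{(t)}$ have positive grade, $H^0_{\mathfrak{m}}(I^t) = H^0_{\mathfrak{m}}(I^{(t)}) = 0$, and the long exact sequence collapses to isomorphisms $H^i_{\mathfrak{m}}(I^t) \cong H^i_{\mathfrak{m}}(I^{(t)})$ for $i \geq 2$ together with the short exact sequence
\[
0 \longrightarrow N \longrightarrow H^1_{\mathfrak{m}}(I^t) \longrightarrow H^1_{\mathfrak{m}}(I^{(t)}) \longrightarrow 0.
\]
Reading off top non-vanishing degrees, $a_i(I^t) = a_i(I^{(t)})$ for $i \geq 2$ and $a_1(I^t) = \max\{a_0(N),\, a_1(I^{(t)})\}$. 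Consequently
\[
\reg I^t \;=\; \max\bigl\{a_0(N) + 1,\; \reg I^{(t)}\bigr\},
\]
so the theorem reduces to showing the single inequality $a_0(N) + 1 \leq \reg I^{(t)}$.

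For this, Theorem \ref{IT} does the essential work: since $I^t = (L(t))$ and every monomial of $I^{(t)}$ of degree at least $2t$ automatically lies in $L(t) \subseteq I^t$, the quotient $N$ vanishes in every degree $d \geq 2t$, giving $a_0(N) \leq 2t - 1$. To match this with $\reg I^{(t)}$, I will observe that $\reg I^{(t)} \geq 2t$. Fixing any edge $xy$ of $G_{n,r}$, $x^t y^t$ is a minimal generator of $I^{(t)}$: a proper divisor $x^a y^b$ with $a + b < 2t$ would need weight at least $t$ against every minimal vertex cover, yet a minimal vertex cover containing $x$ but not $y$ forces $a \geq t$ and one containing $y$ but not $x$ forces $b \geq t$, contradicting $a+b<2t$. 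Both such covers exist because any maximal independent set through one endpoint of $xy$ must exclude the other. This gives $\reg I^{(t)} \geq 2t$, and combined with $a_0(N) + 1 \leq 2t$ closes the argument.

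The main obstacle is really the structural content encoded in Theorem \ref{IT}, which pins $N$ to degrees below $2t$; the auxiliary bound $\reg I^{(t)} \geq 2t$ is a short check on minimal generators of the symbolic power, and the remaining steps are routine homological bookkeeping.
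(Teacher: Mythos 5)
Your proof is correct and follows essentially the same route as the paper: both reduce the statement, via the finite-length module $I^{(t)}/I^t$ (annihilated by $\mathfrak{m}^t$ by Lemma \ref{containment}) and the long exact sequence of local cohomology, to the two facts that $a_0(I^{(t)}/I^t)\leq 2t-1$ (a consequence of Theorem \ref{IT}) and $\reg I^{(t)}\geq 2t$. Your explicit check that $x^ty^t$ is a minimal generator of $I^{(t)}$ for an edge $xy$ is actually a more complete justification of the inequality $\reg I^{(t)}\geq 2t$ than the paper's terse remark that it follows from $I^t\subseteq I^{(t)}$.
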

 \begin{proof}

From Lemma \ref{containment}, it follows that $\dim {I^{(t)}}/{I^t}=0,$ therefore $H_{\m }^i(I^{(t)}/I^t)=0$ for $i>0.$ Now consider the following short exact sequence
   \begin{equation}\label{exact1}
   0\rightarrow I^{(t)}/I^t \rightarrow R/I^t \rightarrow R/I^{(t)}\rightarrow 0.
   \end{equation}

   Applying local cohomology functor we get $H_{\m}^i(R/I^{(t)})\cong H_{\m}^i(R/I^{t})$ for $i\geq 1$ and the following short exact sequence
   \begin{equation} \label{exact2}
0\rightarrow H_{\m}^0(I^{(t)}/I^t) \rightarrow H_{\m}^0(R/I^t) \rightarrow H_{\m}^0(R/I^{(t)})\rightarrow 0.
   \end{equation}

   From $(\ref{exact2}),$ it follows that $a_0(R/I^{(t)})\leq a_0(R/I^t).$
   So we can conclude that $\reg R/I^{(t)}=\max\{a_i(R/I^{(t)})\\+i~|~i\geq 0\}\leq\max\{a_i(R/I^{t})+i~|~i\geq 0\}=\reg(R/I^t). $ Therefore $\reg I^{(t)}\leq \reg I^t.$ Now we prove the other inequality.

%\textbf{Case 2:} If $\dim(I^{(s)}/I^s)=1$ then $H_{\m}^i(I^{(s)}/I^s)=0$ for $i\geq 2.$ Since the vertex set corresponding to the maximal ideal is not minimal vertex cover, therefore the maximal ideal $\m \notin \Ass(R/I^{(s)})$ which implies that $\depth(R/I^{(s)})\geq 1$ and hence $H_{\m}^0(R/I^{(s)})=0.$ Now applying local cohomology functor on (\ref{exact1}) we get
%$H_{\m}^i(R/I^{s})\cong H_{\m}^i(R/I^{(s)})$ for $i\geq 2$ and the following short exact sequence
%\begin{equation} \label{exact3}
%0\rightarrow H_{\m}^1(I^{(s)}/I^s) \rightarrow H_{\m}^1(R/I^s) \rightarrow H_{\m}^1(R/I^{(s)})\rightarrow 0.
%\end{equation}
%From $(\ref{exact3}),$ it follows that $a_1(R/I^{(s)})\leq a_1(R/I^s)$ and hence $\reg I^{(s)}\leq \reg I^s.$

  Since $\dim(I^{(t)}/I^t)=0,$ it follows that $\reg I^{(t)}/I^t=a_0(I^{(t)}/I^t).$ If a monomial $x^{\underline{a}}\in I^{(t)}$ with $\deg(x^{\underline{a}})\geq 2t,$ then $x^{\underline{a}}\in (L(t))$ and hence by Theorem \ref{IT}, it follows that $x^{\underline{a}}\in I^t.$ Thus $a_0(I^{(t)}/I^t)\leq 2t-1,$ which implies $\reg(I^{(t)}/I^t)\leq 2t-1.$ Since $I^t \subseteq I ^{(t)},$ we have $2t\leq \reg I^{(t)}.$ Therefore $\reg I^{(t)}/I^t\leq \reg R/I^{(t)}.$
  Now by applying \cite[Corollary 20.19]{eisenbud2013commutative} to (\ref{exact1}) we get $\reg R/I^t\leq \max\{\reg I^{(t)}/I^t,\reg R/I^{(t)}\},$ which implies $\reg I^t\leq \reg I^{(t)}$ and hence $\reg I^t = \reg I^{(t)}.$

\end{proof}

\begin{remark}
If $r=\Bigl\lfloor\dfrac{n}{2}\Bigr\rfloor -1$ and $n$ is odd then $G_{n,r}$ gives the class of odd cycles which has been studied by Gu et al. in \cite{gu2018symbolic}. If $r=0$ then $G_{n,r}$ gives the class of complete graph which we have studied in \cite{chakraborty2019invariants}. Both of the above results follow as a special case of our result.

\end{remark}

\section{Invariants of Edge Ideals}

In this section, we calculate Waldschmidt constant and resurgence for the edge ideal $I=I(G_{n,r})$ of the whole class.

\begin{defn}
 An automorphism of a graph $G$ is a bijection $\pi : V (G) \rightarrow V (G)$ with the property that $e$ is an edge of $G$ if and only if $\pi(e)$ is an edge as well.
\end{defn}

\begin{defn}
A graph $G$ is vertex-transitive if for all $u,v \in V(G)$ there is an automorphism $\pi$ of $G$ with $\pi(u)=v.$
\end{defn}

\begin{lem}
The graph $G_{n,r}$ is vertex-transitive graph.
\end{lem}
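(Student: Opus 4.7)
The plan is to exhibit an explicit cyclic automorphism and observe that its powers act transitively on $V(G_{n,r})$. Define $\pi \colon V(G_{n,r}) \to V(G_{n,r})$ by $\pi(x_i) = x_{i+1}$, where indices are read modulo $n$ (so $\pi(x_n) = x_1$). Clearly $\pi$ is a bijection, being the restriction of an $n$-cycle on the index set.

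The key step is to check that $\pi$ preserves adjacency. By the construction of $G_{n,r}$, together with Observation \ref{twoindependent}, a pair $\{x_i,x_j\}$ with $i<j$ is an edge if and only if
\[
r+1 \;\leq\; j-i \;\leq\; n-(r+1).
\]
Equivalently, in cyclic terms, $\{x_i,x_j\}$ is an edge if and only if the cyclic distance $\min\{|j-i|,\,n-|j-i|\}$ lies strictly above $r$. Since the quantity $j-i \pmod n$ is unchanged when both indices are shifted by $+1$, the pair $\{\pi(x_i),\pi(x_j)\} = \{x_{i+1},x_{j+1}\}$ satisfies exactly the same inequality (with the convention that $x_{n+1}=x_1$). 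Hence $\pi$ sends edges to edges and non-edges to non-edges, so $\pi$ is a graph automorphism.

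Finally, for any two vertices $x_u,x_v \in V(G_{n,r})$, the iterated power $\pi^{v-u}$ (with $v-u$ computed mod $n$) is again an automorphism and sends $x_u$ to $x_v$. Therefore $G_{n,r}$ is vertex-transitive.

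The only mildly delicate point is the cyclic indexing convention, which is why I reduce edge-adjacency to the shift-invariant quantity $j-i \pmod n$ before verifying that $\pi$ is an automorphism; beyond that the argument is essentially immediate.
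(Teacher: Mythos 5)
Your proof is correct and takes essentially the same approach as the paper: both exhibit the cyclic shift as an automorphism, the paper directly using the shift by $j-i$ modulo $n$ while you generate it as a power of the unit shift. Your version is slightly more explicit in verifying that adjacency (characterized by the shift-invariant cyclic distance exceeding $r$) is preserved, which the paper leaves as an observation.
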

\begin{proof}
For every vertex $x_i,x_j \in V(G)$ we have to produce an automorphism $\pi$ of $G$ such that $\pi(x_i)=x_j.$ Let $\overline{k}\cong j-i+k$ mod $n,$ where $x_0=x_n.$ Now consider the map $\pi(x_k)=x_{\overline{k}}.$ Observe that it is an automorphism.
\end{proof}

\begin{proposition}\label{alpha}
Let $I=I(G_{n,r})$ be the edge ideal of a graph $G_{n,r}$. Then $\alpha(I^{(t)}) < \alpha (I^s)$ if and only if $I^{(t)}\nsubseteq I^s.$
\end{proposition}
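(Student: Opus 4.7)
One direction is essentially degree bookkeeping, and the other reduces cleanly to Theorem~\ref{IT}. Here is the strategy.

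First I would handle the easy forward direction. Suppose $\alpha(I^{(t)}) < \alpha(I^s)$. Pick any nonzero element $f \in I^{(t)}$ of degree $\alpha(I^{(t)})$ (e.g.\ a minimal monomial generator). Since every nonzero element of $I^s$ has degree at least $\alpha(I^s) > \deg(f)$, we conclude $f \notin I^s$, giving $I^{(t)} \nsubseteq I^s$.

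For the converse I argue the contrapositive: assume $\alpha(I^{(t)}) \geq \alpha(I^s)$ and show $I^{(t)} \subseteq I^s$. The key numerical observation is that $I$ is generated in degree $2$, so $\alpha(I^s) = 2s$, and from $I^t \subseteq I^{(t)}$ we get $\alpha(I^{(t)}) \leq \alpha(I^t) = 2t$. The assumption then forces $2s \leq 2t$, i.e.\ $s \leq t$, and in particular $I^{(t)} \subseteq I^{(s)}$.

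Now let $x^{\underline{a}} \in I^{(t)}$ be any monomial. Two facts hold simultaneously: (i) $\deg(x^{\underline{a}}) \geq \alpha(I^{(t)}) \geq 2s$, and (ii) $x^{\underline{a}} \in I^{(s)}$, hence $W_{V'}(x^{\underline{a}}) \geq s$ for every minimal vertex cover $V'$. By the definition of $L(s)$ these two conditions put $x^{\underline{a}} \in L(s)$. Applying Theorem~\ref{IT}, $(L(s)) = I^s$, so $x^{\underline{a}} \in I^s$. Since $I^{(t)}$ is a monomial ideal, this is enough to conclude $I^{(t)} \subseteq I^s$.

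The only nontrivial input is Theorem~\ref{IT}; the rest is purely degree counting. I do not expect any real obstacle, since the statement is essentially a formal consequence of the fact that ordinary and symbolic powers for this class coincide on the ``high degree'' part (the $L(s)$ part).
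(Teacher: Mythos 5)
Your proof is correct and follows essentially the same route as the paper: the paper's proof consists of invoking Theorem~\ref{IT} (that $(L(t))=I^t$) and then deferring to \cite[Lemma 5.5]{janssen2017comparing}, and your argument is precisely the unpacked version of that citation — the forward direction by comparing least generating degrees, and the converse by showing every monomial of $I^{(t)}$ lands in $L(s)$ and hence in $I^s$. No gaps; you have simply made explicit the degree bookkeeping ($\alpha(I^s)=2s$, $\alpha(I^{(t)})\leq 2t$, hence $s\leq t$ and $I^{(t)}\subseteq I^{(s)}$) that the paper leaves to the reference.
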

\begin{proof}
By Theorem \ref{IT}, we have $(L(t))=I^t.$ Then the proof follows similarly from \cite[Lemma 5.5]{janssen2017comparing}.
\end{proof}
\bigskip
 Let us recall some notions used in section $5$ of the article \cite{janssen2017comparing}. Note that the graph $G_{n,r}$ contains unmixed graphs as well as non-unmixed graphs. For unmixed graphs, there are only $n$ minimal vertex covers and they are precisely consecutive $n-(r+1)$ vertices. But for non-unmixed graph there are some extra minimal vertex covers and they contain more than $n-(r+1)$ vertices. Now fix the minimal vertex covers $V_1,\ldots,V_k$ and order them so that $|V_i| \leq |V_{i+1}|.$ We define the minimal vertex cover matrix $A = (a_{ij})$ to be the matrix
of $0$'s and $1$'s defined by:
\begin{align*}
                                    a_{ij} & =  \left\{\begin{array}{ll}
                                     0 & \text{ if $x_j\notin V_i $} \\
                                       1 & \text{  if $x_j\in V_i$ }\\
                                        \end{array} \right.\\
                                        \end{align*}
We first seek a lower bound of $\alpha(I^{(t)})$ using linear programming.
Consider the following linear program, where A is the minimal vertex cover
matrix,\\
$$b=\begin{bmatrix}
1\\
\vdots\\

1\\
\end{bmatrix}
\text{  and  } c=\begin{bmatrix}
t\\
\vdots\\

t\\
\end{bmatrix}$$

\begin{equation}\label{1}
\begin{aligned}
& {\text{minimize}}
& & b^Ty \\
& \text{subject to}
& & Ay \geq c \; \text{and } y\geq 0.
\end{aligned}
\end{equation}

Observe that if $y^*$ is the value which realizes (\ref{1}), we have $\alpha(I^{(t)})\geq b^Ty^*.$

Now consider the submatrix $A^{\prime}$ of $A$ consisting of first $n$ rows of $A,$ then it is a $n\times n$ matrix. Thus we create a sub-program of (\ref{1}).
\begin{equation}\label{3}
\begin{aligned}
& {\text{minimize}}
& & b^Ty \\
& \text{subject to}
& & A^{\prime}y \geq c \; \text{and } y\geq 0.
\end{aligned}
\end{equation}

\begin{lem}\label{alphain}

 Let $I=I(G_{n,r})$ be the edge ideal of the graph $G_{n,r},$ then $\alpha(I^{(t)})\geq\Bigl\lceil\dfrac{nt}{n-(r+1)}\Bigr\rceil.$
\end{lem}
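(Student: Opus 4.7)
The plan is to exploit the circulant structure of the submatrix $A'$ introduced in the excerpt to lower bound the value of the sub-program (3), and then pipe that bound through (1) to $\alpha(I^{(t)})$. The decisive structural observation is that the $n$ rows of $A'$ are the indicators of the $n$ cyclic windows of $n-(r+1)$ consecutive vertices, so $A'$ is a $0/1$ circulant matrix. In particular each row of $A'$ contains exactly $n-(r+1)$ ones, and by cyclic symmetry each column of $A'$ also contains exactly $n-(r+1)$ ones (equivalently, each vertex $x_j$ lies in exactly $n-(r+1)$ consecutive windows of length $n-(r+1)$).

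The central step is a one-line double-counting argument. For any $y \geq 0$ feasible for (3), summing the $n$ constraints $(A'y)_i \geq t$ and swapping the order of summation gives
\[
nt \;\leq\; \sum_{i=1}^n (A'y)_i \;=\; \sum_{j=1}^n\left(\sum_{i=1}^n a'_{ij}\right)\!y_j \;=\; (n-(r+1))\sum_{j=1}^n y_j \;=\; (n-(r+1))\, b^T y,
\]
so every feasible $y$ of (3) satisfies $b^T y \geq \frac{nt}{n-(r+1)}$. Since the constraints of (3) are a subset of those of (1), every feasible $y$ of (1) is also feasible for (3), and the same inequality holds at the minimizer $y^*$ of (1). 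Combining this with the observation recorded in the excerpt that $\alpha(I^{(t)}) \geq b^T y^*$ and the integrality of $\alpha(I^{(t)})$ yields
\[
\alpha(I^{(t)}) \;\geq\; b^T y^* \;\geq\; \frac{nt}{n-(r+1)},\qquad \text{hence}\qquad \alpha(I^{(t)}) \;\geq\; \Bigl\lceil \frac{nt}{n-(r+1)}\Bigr\rceil.
\]

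The only mildly delicate point is verifying the column-sum claim for $A'$, which requires careful indexing modulo $n$ but is immediate from the cyclic symmetry of the collection of consecutive $(n-(r+1))$-tuples. Note that the symmetric primal point $y_j = t/(n-(r+1))$ for all $j$ attains equality in the bound above, so (3) is solved exactly, and the only sources of possible slack between this bound and the true value of $\alpha(I^{(t)})$ are the additional minimal vertex covers that appear in the non-unmixed case (which tighten (1) over (3)) and integer rounding in the ceiling.
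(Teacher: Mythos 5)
Your proposal is correct and takes essentially the same route as the paper: both reduce to the sub-program (3) and show its value is at least $\frac{nt}{n-(r+1)}$, the paper by exhibiting the dual feasible point $x^*=\bigl(\frac{1}{n-(r+1)},\dots,\frac{1}{n-(r+1)}\bigr)$ and invoking LP duality, you by summing the constraints with those same uniform weights (i.e.\ the weak-duality inequality written out as a double count over the circulant column sums). The remaining steps --- (1) refines (3), $\alpha(I^{(t)})\geq b^Ty^*$, and integrality --- match the paper exactly.
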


\begin{proof}
It is easy to check that $$y^*=\begin{bmatrix}
\frac{t}{n-(r+1)}\\
\vdots\\

\frac{t}{n-(r+1)}\\
\end{bmatrix}$$
 is a feasible solution of (\ref{3}). In this case $b^Ty^*=\frac{nt}{n-(r+1)}.$ To show that this is the value of (\ref{3}), we make use of the fundamental theorem of linear programming by showing the existence of an $x^*$ which produces the same value for the dual linear program:

\begin{equation}\label{2}
\begin{aligned}
& {\text{maximize}}
& & c^Tx \\
& \text{subject to}
& & {A^{\prime}}^Tx \leq b \; \text{and } x\geq 0.
\end{aligned}
\end{equation}

Let $$x^*=\begin{bmatrix}
\frac{1}{n-(r+1)} \\
\vdots\\

\frac{1}{n-(r+1)}\\
\end{bmatrix}.$$
 It is straightforward to check that $x^*$ is a feasible solution of the dual problem and $c^Tx^*=b^Ty^*=\frac{nt}{n-(r+1)}.$

Observe that (\ref{1}) is obtained from (\ref{3}) by introducing additional constraints  and also $\alpha(I^{(t)})$ is an integer. Hence the result follows.
\end{proof}

In order to compute Waldschmidt constant we now introduce the fractional chromatic number.
\begin{defn}\cite{scheinerman2011fractional}
A $b$-fold coloring of a graph $G$ assigns to each vertex of $G$ a set of $b$ colors so that adjacent vertices receive disjoint sets of colors. We say that $G$ is $a:b$-colorable if it has a $b$-fold coloring in which the colors are drawn from a palette of $a$ colors. We sometimes refer to such a coloring as an $a:b$-coloring. The least $a$ for which $G$ has a $b$-fold coloring is the $b$-fold chromatic number of $G$, denoted $\chi_b(G).$
Since $\chi_{a+b} (G) \leq  \chi_{ a }(G) + \chi_{b}(G),$ we define the fractional chromatic number to be $ \chi^{*}(G) = \displaystyle{\lim _{b\rightarrow \infty}\frac{\chi_{b}(G)}{b}=\inf_{b}\frac{\chi_{b}(G)}{b}} .$

\end{defn}

\begin{thm}
Let $I=I(G_{n,r})$ be the edge ideal of a graph $G_{n,r}$ then the Waldschmidt constant $\widehat{\alpha}(I)=\frac{n}{n-(r+1)}.$
\end{thm}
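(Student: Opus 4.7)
The plan is to establish the equality by proving the two inequalities separately, using the Waldschmidt constant's definition $\widehat{\alpha}(I) = \lim_{s\to\infty} \alpha(I^{(s)})/s$.

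The lower bound $\widehat{\alpha}(I) \geq \frac{n}{n-(r+1)}$ is immediate from Lemma \ref{alphain}: dividing the inequality $\alpha(I^{(s)}) \geq \bigl\lceil\frac{ns}{n-(r+1)}\bigr\rceil$ by $s$ and letting $s \to \infty$ yields the claim. This is the reason the LP computation was performed in that lemma.

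For the upper bound, I would produce an explicit low-degree family in the symbolic powers. Take $f = x_1 x_2 \cdots x_n$ and, given $s$, set $k = \bigl\lceil\frac{s}{n-(r+1)}\bigr\rceil$. The key observation is that every minimal vertex cover $V'$ of $G_{n,r}$ satisfies $|V'| \geq n-(r+1)$, because $V(G_{n,r}) \setminus V'$ is an independent set and the independence number of $G_{n,r}$ is $r+1$ (from the remark in Section 3). Hence $W_{V'}(f^k) = k|V'| \geq k(n-(r+1)) \geq s$ for every minimal vertex cover $V'$, so Lemma \ref{psymbolic} gives $f^k \in I^{(s)}$. Therefore
\[
\alpha(I^{(s)}) \leq \deg(f^k) = nk \leq n\Bigl(\tfrac{s}{n-(r+1)}+1\Bigr),
\]
and dividing by $s$ and passing to the limit yields $\widehat{\alpha}(I) \leq \frac{n}{n-(r+1)}$. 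Combining the two inequalities gives the theorem.

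The argument is self-contained and encounters no real obstacle; the main delicacy is that $G_{n,r}$ need not be unmixed in general, so one must justify the uniform size estimate $|V'| \geq n-(r+1)$ for \emph{every} minimal vertex cover rather than only for the consecutive-$(n-(r+1))$-tuple covers. This is handled by the independence-number computation already on hand. As an alternative route one could invoke the fractional chromatic number: since $G_{n,r}$ is vertex-transitive with $\alpha(G_{n,r}) = r+1$, one has $\chi^{*}(G_{n,r}) = \frac{n}{r+1}$, and the known identity $\widehat{\alpha}(I(G)) = \frac{\chi^{*}(G)}{\chi^{*}(G)-1}$ gives the same value $\frac{n}{n-(r+1)}$; but the direct construction above is more transparent and uses only the lemmas already in the paper.
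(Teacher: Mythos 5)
Your proof is correct, but it takes a genuinely different route from the paper. The paper's proof is a two-line citation chain: it uses the vertex-transitivity of $G_{n,r}$ (the preceding lemma) together with \cite[Proposition 3.1.1]{scheinerman2011fractional} to get $\chi^{*}(G_{n,r})=\frac{n}{r+1}$ from the independence number, and then invokes \cite[Theorem 4.6]{bocci2016}, which expresses the Waldschmidt constant of an edge ideal in terms of the fractional chromatic number. You instead prove the two inequalities directly: the lower bound by dividing the LP estimate of Lemma \ref{alphain} by $s$ and passing to the limit, and the upper bound by exhibiting $f^{k}=(x_1\cdots x_n)^{k}$ with $k=\bigl\lceil\frac{s}{n-(r+1)}\bigr\rceil$ in $I^{(s)}$, which is justified correctly --- the complement of any minimal vertex cover is an independent set, so every minimal vertex cover (whether or not $G_{n,r}$ is unmixed) has at least $n-(r+1)$ vertices, and Lemma \ref{psymbolic} applies. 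Your approach is more self-contained and elementary: it needs neither the vertex-transitivity lemma nor the two cited external results, only machinery already developed in the paper, and it makes the extremal monomials realizing $\widehat{\alpha}(I)$ explicit. What the paper's approach buys is brevity and a conceptual explanation (the value is forced by the fractional chromatic number of a vertex-transitive graph); you yourself note this alternative at the end, so the two arguments are complementary rather than in conflict.
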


\begin{proof}
By \cite[Proposition 3.1.1]{scheinerman2011fractional}, it follows that the fractional chromatic number $\chi^*(G_{n,r})=\frac{n}{r+1},$ as independence number is $r+1.$ Thus by \cite[Theorem 4.6]{bocci2016} we have $\widehat{\alpha}(I))=\frac{n}{n-(r+1)}.$
\end{proof}

\begin{thm}
Let $I=I(G_{n,r})$ be the edge ideal then $\rho(I)=\frac{2(n-(r+1))}{n}.$

\end{thm}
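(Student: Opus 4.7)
The plan is to combine the equivalence \emph{"$I^{(s)} \nsubseteq I^t$ iff $\alpha(I^{(s)})<\alpha(I^t)$"} from Proposition \ref{alpha} with the lower bound on $\alpha(I^{(s)})$ supplied by Lemma \ref{alphain} and a matching upper bound coming from an explicit monomial. Since $I$ is generated in degree $2$, we have $\alpha(I^t)=2t$, so the resurgence equals
\[
\rho(I)=\sup\bigl\{\tfrac{s}{t}\; :\; \alpha(I^{(s)})<2t\bigr\}.
\]

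For the upper bound $\rho(I)\le \tfrac{2(n-(r+1))}{n}$, I would argue as follows. Suppose $I^{(s)}\nsubseteq I^t$. Then by Proposition \ref{alpha}, $\alpha(I^{(s)})<2t$, and by Lemma \ref{alphain}, $\alpha(I^{(s)})\ge \tfrac{ns}{n-(r+1)}$. Chaining these gives $\tfrac{ns}{n-(r+1)}<2t$, i.e. $\tfrac{s}{t}<\tfrac{2(n-(r+1))}{n}$. Taking the supremum over all such $(s,t)$ yields $\rho(I)\le \tfrac{2(n-(r+1))}{n}$.

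For the matching lower bound, I would exhibit a sequence of pairs $(s_j,t_j)$ with $I^{(s_j)}\nsubseteq I^{t_j}$ and $s_j/t_j\to \tfrac{2(n-(r+1))}{n}$. Set $s=s_j:=j(n-(r+1))$ and consider the monomial $M_j:=(x_1x_2\cdots x_n)^j$ of degree $nj$. Since every minimal vertex cover $V$ of $G_{n,r}$ (consecutive or not) has cardinality at least $n-(r+1)$, the vertex weight is $W_V(M_j)=j|V|\ge j(n-(r+1))=s$, so $M_j\in I^{(s)}$; consequently $\alpha(I^{(s)})\le nj$. Combined with Lemma \ref{alphain}, $\alpha(I^{(s)})=nj$. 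Now pick $t=t_j:=\lfloor nj/2\rfloor+1$ so that $2t_j>nj=\alpha(I^{(s_j)})$; by Proposition \ref{alpha} this forces $I^{(s_j)}\nsubseteq I^{t_j}$. A direct calculation gives
\[
\frac{s_j}{t_j}=\frac{j(n-(r+1))}{\lfloor nj/2\rfloor+1}\;\longrightarrow\;\frac{2(n-(r+1))}{n}\qquad (j\to\infty),
\]
so $\rho(I)\ge \tfrac{2(n-(r+1))}{n}$.

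The only real subtlety, and the step I would be most careful with, is the upper bound $\alpha(I^{(s_j)})\le nj$: one must verify that the "all-variable" monomial $M_j$ is symbolically in $I^{(s)}$ for every minimal vertex cover, including the non-consecutive ones that arise when $G_{n,r}$ is not unmixed. This is handled uniformly by the observation that each minimal vertex cover has cardinality $\ge n-(r+1)$ (the independence number is $r+1$), so $W_V(M_j)=j|V|\ge s$ automatically. With that observation in hand, both inequalities combine to give $\rho(I)=\tfrac{2(n-(r+1))}{n}$.
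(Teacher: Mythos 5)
Your argument is correct. The upper bound is exactly the paper's: combine Proposition \ref{alpha} with Lemma \ref{alphain} to get $\tfrac{ns}{n-(r+1)}<2t$ whenever $I^{(s)}\nsubseteq I^t$. Where you diverge is the lower bound. The paper gets it in one line by quoting the general inequality $\alpha(I)/\widehat{\alpha}(I)\leq\rho(I)$ from Guardo--Harbourne--Van Tuyl, together with the computation $\widehat{\alpha}(I)=\tfrac{n}{n-(r+1)}$ from the preceding theorem; you instead build an explicit sequence of non-containments $I^{(s_j)}\nsubseteq I^{t_j}$ with $s_j/t_j\to\tfrac{2(n-(r+1))}{n}$, using the monomial $(x_1\cdots x_n)^j$ and the fact that every minimal vertex cover has size at least $n-(r+1)$ (since the independence number is $r+1$). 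Both routes are sound: the paper's is shorter but leans on an external asymptotic-resurgence theorem and on the Waldschmidt constant computation, while yours is self-contained modulo Proposition \ref{alpha} and Lemma \ref{alphain}, and has the added virtue of exhibiting concrete witnesses to the failure of containment. You correctly flagged the one genuine subtlety --- that $(x_1\cdots x_n)^j\in I^{(s_j)}$ must be checked against \emph{all} minimal vertex covers, not just the consecutive ones --- and your resolution via the cardinality bound $|V|\geq n-(r+1)$ is exactly right.
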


  \begin{proof}

Consider the set $T=\{\frac{s}{t}\mid I^{(s)}\nsubseteq I^t \}.$ For any $\frac{s}{t}\in T$ by Proposition \ref{alpha}, we have $\alpha(I^{(s)})< \alpha(I^t).$
Then by applying Lemma \ref{alphain}, we can write
$\frac{ns}{n-(r+1)}<2t, $ which implies $\frac{s}{t}<\frac{2(n-(r+1))}{n}.$ Hence $\rho(I)\leq\frac{2(n-(r+1))}{n}.$ As $\frac{\alpha(I)}{\widehat{\alpha}(I)}=\frac{2(n-(r+1))}{n},$
 by \cite[Theorem 1.2]{guardo2013}, we have ${\alpha(I)}/{\hat{\alpha}(I)}\leq \rho(I),$ therefore $\frac{2(n-(r+1))}{n}\leq \rho(I)$ and hence $\rho(I)=\frac{2(n-(r+1))}{n}.$

        \end{proof}

 \end{document}